\newcolumntype{?}{!{\vrule width 1pt}}
\newif\ifincludeprevious
\newtheorem{theorem}{Theorem}
 \numberwithin{theorem}{section}
\newtheorem{proposition}[theorem]{Proposition}
\newtheorem{lemma}[theorem]{Lemma}
\newtheorem{corollary}[theorem]{Corollary}
\theoremstyle{definition}
\newtheorem{definition}[theorem]{Definition}
\newtheorem{remark}[theorem]{Remark}
\newtheorem{example}[theorem]{Example}
\newcommand{\xx}{1}
\newcommand{\yy}{1}
\newcommand{\bt}{\tikz{\node[shape=circle,draw,inner sep=2pt] {};}}
\newcommand{\ls}[1]{{\footnotesize{#1}}}
\newcommand{\stage}[2]{\tikz[baseline=(char.base)]{
            \node[shape=circle,draw,inner sep=0.5pt,fill={#1}] (char) {#2};}}
\newcommand{\ifi}{\ensuremath{\ \Leftrightarrow}\ }
\newcommand{\ds}{\displaystyle}
\newcommand{\R}{\mathbb{R}}
\newcommand{\Z}{\mathbb{Z}}
\newcommand{\T}{\mathcal{T}}
\newcommand{\B}{\mathcal{B}}
\newcommand{\A}{\mathcal{A}}
\newcommand{\M}{\mathcal{M}}
\newcommand{\la}{\mathcal{L}}
\newcommand{\aaa}{\boldsymbol{a}}
\newcommand{\bbb}{\boldsymbol{b}}
\newcommand{\pp}{\mathbb{P}}
\newcommand{\C}{\mathbb{C}}
\DeclareMathOperator{\im}{im}
\newcommand{\ipaths}{\ensuremath{I_{\mathrm{Paths}}}}
\newcommand{\degree}{\mathrm{deg}}
\newcommand{\Quad}{\mathrm{Quad}}
\newcommand{\Lift}{\mathrm{Lift}}
\renewcommand{\>}{\rangle}
\newcommand\indep{\protect\mathpalette{\protect\independenT}{\perp}}
\newcommand\independent{\protect\mathpalette{\protect\independenT}{\perp}}
\def\independenT#1#2{\mathrel{\rlap{$#1#2$}\mkern2mu{#1#2}}}
\date{}
\title{\textbf{Gr\"obner bases for staged trees}
}
\author{Lamprini Ananiadi and Eliana Duarte}
\date{\small \today}                                           
\keywords{Graphical models, toric ideals, staged trees, Markov bases, toric fiber products\\
\textit{2010 Mathematics Subject Classification}: Primary: 113P10;  Secondary: 13P25, 05E40.}
\begin{document}


\begin{abstract}
In this article we consider the problem of finding generators of
 the toric ideal associated to a combinatorial object called a staged tree. Our main theorem
 states that toric ideals
of staged trees that are balanced and stratified are generated
by a quadratic Gr\"obner basis whose initial ideal is squarefree.  The proof of this result is based on Sullivant's \cite{Sullivant2007}
toric fiber product construction. 
\end{abstract}


\maketitle

\section{Introduction}

 The study of toric ideals associated to statistical models was pioneered by the work of Diaconis and Sturmfels
 \cite{DiaconisSturmfels} who first used the generators of a toric ideal to formulate 
 a sampling algorithm for discrete distributions. Since then, and with the subsequent work of \cite{Sullivant2006,Sullivant2007} and \cite{GMS}
 the study of toric ideals of discrete statistical models has been an active area of research in 
 Algebraic Statistics.  The books by Sullivant \cite[Chapter 9]{Sullivant2019} and Aoki, Hara and Takemura \cite{Aoki2012} are good references 
 to learn about the role of toric ideals in statistics. A recent introduction to the topic from the point of view of binomial ideals can be found in \cite[Chapter 9]{BinomialIdeals}, which also contains a thorough list of references of previous contributions to this topic.

In 2008, Smith and Anderson \cite{SmithAnderson} introduced a new graphical discrete
statistical model called a \emph{staged tree model}. This model is represented
by an event tree together with an equivalence relation on its vertices. 
Staged tree models are useful to
represent conditional independence relations among events. In particular we can use staged tree models to 
represent some
conditional independence statements between random variables. For example those coming from graphical
models such as Bayesian networks and decomposable models.  Hence  any discrete 
Bayesian network or decomposable model is also a staged tree model \cite{SmithAnderson}. There are two properties that make
staged tree models
more general than Bayesian networks or decomposable models. The first is that the state space of a staged tree
model does not have to be a cartesian product. The second is that using staged
tree models it is possible to represent extra context-specific conditional independence between events.
The book of Collazo, G\"orgen and Smith \cite{CEGbook} is a good reference to learn about these models.

In this article we define the toric 
ideal associated to a \emph{staged tree} and study its properties from an algebraic and combinatorial point of view.
We present Theorem~\ref{thm:main}
which states that toric ideals of staged trees  that are \emph{balanced} and \emph{stratified} have  quadratic Gr\"obner 
basis with squarefree initial ideal. 
We apply Theorem~\ref{thm:main} in Section~\ref{sec:stats} to obtain Gr\"obner 
bases for toric ideals of staged tree models. Our results provide a
 new point of view on the construction of Gr\"obner bases for
decomposable graphical models, some conditional independence models as well as the construction
of Gr\"obner bases for staged tree models whose underlying tree is \emph{asymmetric}.

%
%
%

This article is organized as follows. In Section~\ref{sec:stagedtrees} we define the toric ideal 
associated
to a staged tree. In Section~\ref{sec:tfps} we formulate a toric fiber product construction for
balanced and stratified staged trees. In Section~\ref{sec:proofs} we prove our main result Theorem~\ref{thm:main}. 
 Finally in Section~\ref{sec:stats} we apply our results to compute Gr\"obner bases for
 several statistical models.

\section{Staged trees } \label{sec:stagedtrees}
We start by defining our two objects of interest:  a staged tree and its associated toric
ideal.
Let $\T = (V,E)$ denote a directed rooted
tree graph with vertex set $V$ and edge set $E$.
For $v,w\in V$ the directed edge in $E$ from $v$ to $w$ is denoted by $(v,w)$, 
the set of children of $v$ is $\mathrm{ch}(v)=\{u \mid (v,u)\in E \}$, and the set
of outgoing edges from $v$ is $E(v)=\{(v,u)\,|\,u\in \mathrm{ch}(v) \}$.
Given a set $\la$ of labels, to each $e\in E$ we associate a label from $\la$
via the rule $\theta: E \to \la$. We require that $\theta$ is surjective.
For each vertex $v\in V$,  we let  $\theta_{v}:=\{\theta(e)\,\mid\,
e\in E(v)\}$ be the set of labels associated to $v$.

\begin{definition}\label{def:stagedtree} Let $\la$ be a set of labels.
A tree $\T=(V,E)$ together with a labelling $\theta:E\to \la$ is a \emph{staged tree} if: (1) for each $v\in V,$
$|\theta_{v}|=|E(v)|$, and (2) for any two vertices $v,w \in V$ the sets $\theta_{v}, \theta_{w}$ are either equal
or disjoint.
\end{definition}

 Conditions (1) and (2) in Definition~\ref{def:stagedtree} of a staged tree define an equivalence relation on the set of vertices of $\T$. Namely
 $v,w \in V$ are equivalent if and only if $\theta_{v}= \theta_{w}$. 
 We refer to the partition induced by
 this equivalence relation on the set $V$ as the set of stages 
of $\T$ and to a single element in this partition as a \emph{stage}.  We use $(\T,\theta)$ to denote
a staged tree with labeling rule $\theta$. For simplicity we will often drop the use of $\theta$ and
write $\T$ for a staged tree.

To define the toric ideal associated to $(\T,\theta)$ we define two polynomial rings. The first ring is $\R[p]_{\T}:=\R[p_{\lambda} \,|\, \lambda
\in \Lambda]$ where $\Lambda$ is the set of root-to-leaf 
paths in $\T$. The second ring is
$\R[\Theta]_{\T}:=\R[z,\la]$ where the labels in $\la$ are 
indeterminates
together with a homogenizing variable $z$. For a directed or undirected path $\gamma$ in $\T$, 
$E(\gamma)$ is the set of edges in $\gamma$.

\begin{definition} \label{def:toricideal}
The \emph{toric staged tree ideal} associated to $(\T,\theta)$ is the kernel of the ring homomorphism  $\varphi_{\T}:\R[p]_{\T} \rightarrow \R[\Theta]_
{\T}$ defined as
\begin{align}\label{eq:param}
  p_{\lambda} &\mapsto \ds z\cdot \prod_{e\in E(\lambda)} \theta(e)\,\,.
\end{align}
\end{definition}
If $n=|\la|$ is the number of distinct edge labels in $\T$, then  $\ker(\varphi_{\T})$ is the defining 
ideal of the projective toric variety defined by the closure of the image of the
monomial parameterization $\Phi_{\T}:(\C^*)^n\to \pp^{|\Lambda|-1}$ given by
$(\theta(e)\,|\, \theta(e)\in \im(\theta)) \mapsto \prod_{e\in E(\lambda)} \theta(e)$.

\begin{example} \label{ex:deco}
The staged tree $\T_1$ in Figure~\ref{fig:trees} has label set
 $\la=\{s_0,\ldots,s_{13}\}$. Each vertex in $\T_1$ is denoted by a string of $0$'s and $1$'s and each edge has a label associated to it.
 The root-to-leaf paths in $\T$ are  denoted by $p_{ijkl}$ where $i,j,k,l\in \{0,1\}$. A vertex in $\T_1$
represented with a blank circle indicates a stage consisting of a single vertex. We use colors in the
 vertices
of $\T_1$ to indicate which vertices are in the same stage. For instance all the purple vertices, i.e. the set of vertices 
$\{000,010,100,110\}$, are
in the same stage and therefore they 
have the same set $\{s_{10},s_{11}\}$ of associated edge labels. The map $\Phi_{\T}$ sends
$(s_0,\ldots,s_{13})$ to
\begin{align*}
 ({s}_{0}{s}_{2}{s}_{6}{s}_{10},\,{s}_{0}{s}_{2}{s}_{6}{s}_{11},\,{s}_{0}{s}_{2}{s}_{7}{s}_{12},
     \,{s
      }_{0}{s}_{2}{s}_{7}{s}_{13},\,{s}_{0}{s}_{3}{s}_{8}{s}_{10},\,{s}_{0}{s}_{3}{s}_{8}{s}_{11},\,{s}_{0}{s}_{3
      }{s}_{9}{s}_{12},\, 
       {s}_{0}{s}_{3}{s}_{9}{s}_{13},\,\\{s}_{1}{s}_{4}{s}_{6}{s}_{10},\,{s}_{1}{s}_{4}{s}_{6}{s
      }_{11},\,{s}_{1}{s}_{4}{s}_{7}{s}_{12},\,{s}_{1}{s}_{4}{s}_{7}{s}_{13},\,{s}_{1}{s}_{5}{s}_{8}{s}_{10},\,{s
      }_{1}{s}_{5}{s}_{8}{s}_{11},\, 
       {s}_{1}{s}_{5}{s}_{9}{s}_{12},\,{s}_{1}{s}_{5}{s}_{9}{s}_{13}).
\end{align*}
 The toric ideal $\ker(\varphi_\T)$ is generated by a quadratic Gr\"obner basis with squarefree initial ideal.

\end{example}

\begin{example}
Consider the two staged trees $\T_2,\T_3$ depicted in Figure~\ref{fig:trees}. For the staged tree $\T_2$,
$\ker(\varphi_{\T_2})$ is
generated
by a quadratic Gr\"obner basis with squarefree intial ideal. For $\T_3$, the ideal $\ker(\varphi_{\T_3})$ 
also has a Gr\"obner
basis with squarefree initial ideal but its elements are of degree two and four.
\end{example}
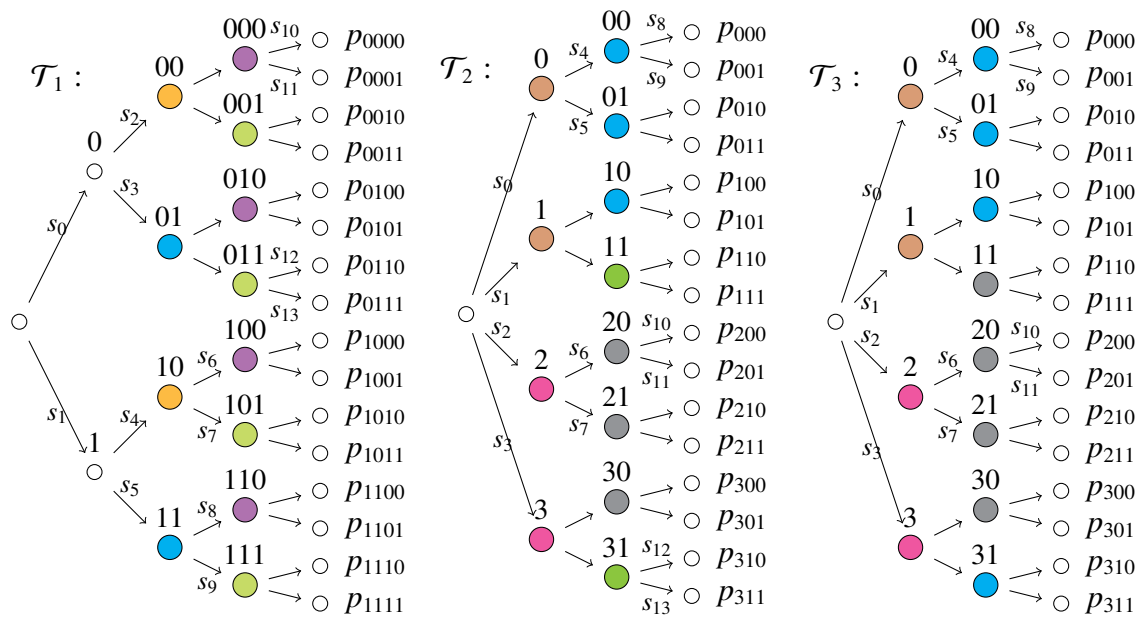
\begin{figure}[t]
\begin{center}
\begin{tikzpicture}
\renewcommand{\xx}{1}
\renewcommand{\yy}{0.5}
\node at (1.5*\xx,14*\yy) {$\T_1:$};

\node (r) at (1*\xx,7.5*\yy) {\bt};

\node (dd1) at (2*\xx,12.3*\yy) {\small $0$};
\node (dd2) at (2*\xx,4.3*\yy) {\small$1$};

\node (d1) at (2*\xx,11.5*\yy) {\bt};
\node (d2) at (2*\xx,3.5*\yy) {\bt};

\node (c1) at (3*\xx,14.3*\yy) {\small$00$};
\node (c2) at (3*\xx,10.3*\yy) {\small$01$};
\node (c3) at (3*\xx,6.3*\yy) {\small$10$};
\node (c4) at (3*\xx,2.3*\yy) {\small$11$};

\node (c1) at (3*\xx,13.5*\yy) {\stage{Dandelion}{$\phantom{;}$}};
\node (c2) at (3*\xx,9.5*\yy) {\stage{ProcessBlue}{$\phantom{;}$}};
\node (c3) at (3*\xx,5.5*\yy) {\stage{Dandelion}{$\phantom{;}$}};
\node (c4) at (3*\xx,1.5*\yy) {\stage{ProcessBlue}{$\phantom{;}$}};

\node (b1) at (4*\xx,15.3*\yy) {\small $000$};
\node (b2) at (4*\xx,13.3*\yy) {\small $001$};
\node (b3) at (4*\xx,11.3*\yy) {\small $010$};
\node (b4) at (4*\xx,9.3*\yy) {\small $011$};
\node (b5) at (4*\xx,7.3*\yy) {\small $100$};
\node (b6) at (4*\xx,5.3*\yy) {\small $101$};
\node (b7) at (4*\xx,3.3*\yy) {\small $110$};
\node (b8) at (4*\xx,1.3*\yy) {\small $111$};

\node (b1) at (4*\xx,14.5*\yy) {\stage{Orchid}{$\phantom{;}$}};
\node (b2) at (4*\xx,12.5*\yy) {\stage{SpringGreen}{$\phantom{;}$}};
\node (b3) at (4*\xx,10.5*\yy) {\stage{Orchid}{$\phantom{;}$}};
\node (b4) at (4*\xx,8.5*\yy) {\stage{SpringGreen}{$\phantom{;}$}};
\node (b5) at (4*\xx,6.5*\yy) {\stage{Orchid}{$\phantom{;}$}};
\node (b6) at (4*\xx,4.5*\yy) {\stage{SpringGreen}{$\phantom{;}$}};
\node (b7) at (4*\xx,2.5*\yy) {\stage{Orchid}{$\phantom{;}$}};
\node (b8) at (4*\xx,0.5*\yy) {\stage{SpringGreen}{$\phantom{;}$}};

\node (a1) at (5*\xx,15*\yy) {\bt};
\node (a2) at (5*\xx,14*\yy) {\bt};
\node (a3) at (5*\xx,13*\yy) {\bt};
\node (a4) at (5*\xx,12*\yy) {\bt};
\node (a5) at (5*\xx,11*\yy) {\bt};
\node (a6) at (5*\xx,10*\yy) {\bt};
\node (a7) at (5*\xx,9*\yy) {\bt};
\node (a8) at (5*\xx,8*\yy) {\bt};
\node (a9) at (5*\xx,7*\yy) {\bt};
\node (a10) at (5*\xx,6*\yy) {\bt};
\node (a11) at (5*\xx,5*\yy) {\bt};
\node (a12) at (5*\xx,4*\yy) {\bt};
\node (a13) at (5*\xx,3*\yy) {\bt};
\node (a14) at (5*\xx,2*\yy) {\bt};
\node (a15) at (5*\xx,1*\yy) {\bt};
\node (a16) at (5*\xx,0*\yy) {\bt};

\draw[->] (r) -- node [above] {\ls{$s_0$}} (d1);
\draw[->] (r) -- node [below] {\ls{$s_1$}} (d2);

\draw[->] (d1) -- node [above] {\ls{$s_2$}} (c1);
\draw[->] (d1) -- node [above] {\ls{$s_3$}} (c2);
\draw[->] (d2) -- node [above] {\ls{$s_4$}} (c3);
\draw[->] (d2) -- node [above] {\ls{$s_5$}} (c4);

\draw[->] (c1) -- node [above] {} (b1);
\draw[->] (c1) -- node [below] {} (b2);
\draw[->] (c2) -- node [above] {} (b3);
\draw[->] (c2) -- node [below] {} (b4);
\draw[->] (c3) -- node [above] {\ls{$s_6$}} (b5);
\draw[->] (c3) -- node [below] {\ls{$s_7$}} (b6);
\draw[->] (c4) -- node [above] {\ls{$s_8$}} (b7);
\draw[->] (c4) -- node [below] {\ls{$s_9$}} (b8);

\draw[->] (b1) -- node [above] {\ls{$s_{10}$}} (a1);
\draw[->] (b1) -- node [below] {\ls{$s_{11}$}} (a2);
\draw[->] (b2) -- node [above] {} (a3);
\draw[->] (b2) -- node [below] {} (a4);
\draw[->] (b3) -- node [above] {} (a5);
\draw[->] (b3) -- node [above] {} (a6);
\draw[->] (b4) -- node [above] {\ls{$s_{12}$}} (a7);
\draw[->] (b4) -- node [below] {\ls{$s_{13}$}} (a8);
\draw[->] (b5) -- node [above] {} (a9);
\draw[->] (b5) -- node [above] {} (a10);
\draw[->] (b6) -- node [above] {} (a11);
\draw[->] (b6) -- node [above] {} (a12);
\draw[->] (b7) -- node [above] {} (a13);
\draw[->] (b7) -- node [above] {} (a14);
\draw[->] (b8) -- node [above] {} (a15);
\draw[->] (b8) -- node [above] {} (a16);


\node [right, xshift=5] at (a1) {$p_{0000}$};
\node [right, xshift=5] at (a2) {$p_{0001}$};
\node [right, xshift=5] at (a3) {$p_{0010}$};
\node [right, xshift=5] at (a4) {$p_{0011}$};
\node [right, xshift=5] at (a5) {$p_{0100}$};
\node [right, xshift=5] at (a6) {$p_{0101}$};
\node [right, xshift=5] at (a7) {$p_{0110}$};
\node [right, xshift=5] at (a8) {$p_{0111}$};
\node [right, xshift=5] at (a9) {$p_{1000}$};
\node [right, xshift=5] at (a10){$p_{1001}$};
\node [right, xshift=5] at (a11){$p_{1010}$};
\node [right, xshift=5] at (a12){$p_{1011}$};
\node [right, xshift=5] at (a13){$p_{1100}$};
\node [right, xshift=5] at (a14){$p_{1101}$};
\node [right, xshift=5] at (a15){$p_{1110}$};
\node [right, xshift=5] at (a16){$p_{1111}$};

\end{tikzpicture}\,
\begin{tikzpicture}
\renewcommand{\xx}{1}
\renewcommand{\yy}{0.5}
\node at (2*\xx,14*\yy) {$\T_2:$};

\node (r) at (2*\xx,7.5*\yy) {\bt};

\node (c1) at (3*\xx,14.3*\yy) {\small$0$};
\node (c2) at (3*\xx,10.3*\yy) {\small$1$};
\node (c3) at (3*\xx,6.3*\yy) {\small$2$};
\node (c4) at (3*\xx,2.3*\yy) {\small$3$};

\node (c1) at (3*\xx,13.5*\yy) {\stage{Tan}{$\phantom{;}$}};
\node (c2) at (3*\xx,9.5*\yy) {\stage{Tan}{$\phantom{;}$}};
\node (c3) at (3*\xx,5.5*\yy) {\stage{VioletRed}{$\phantom{;}$}};
\node (c4) at (3*\xx,1.5*\yy) {\stage{VioletRed}{$\phantom{;}$}};

\node (b1) at (4*\xx,15.3*\yy) {\small $00$};
\node (b2) at (4*\xx,13.3*\yy) {\small $01$};
\node (b3) at (4*\xx,11.3*\yy) {\small $10$};
\node (b4) at (4*\xx,9.3*\yy) {\small $11$};
\node (b5) at (4*\xx,7.3*\yy) {\small $20$};
\node (b6) at (4*\xx,5.3*\yy) {\small $21$};
\node (b7) at (4*\xx,3.3*\yy) {\small $30$};
\node (b8) at (4*\xx,1.3*\yy) {\small $31$};

\node (b1) at (4*\xx,14.5*\yy) {\stage{Cyan}{$\phantom{;}$}};
\node (b2) at (4*\xx,12.5*\yy) {\stage{Cyan}{$\phantom{;}$}};
\node (b3) at (4*\xx,10.5*\yy) {\stage{Cyan}{$\phantom{;}$}};
\node (b4) at (4*\xx,8.5*\yy) {\stage{LimeGreen}{$\phantom{;}$}};
\node (b5) at (4*\xx,6.5*\yy) {\stage{Gray}{$\phantom{;}$}};
\node (b6) at (4*\xx,4.5*\yy) {\stage{Gray}{$\phantom{;}$}};
\node (b7) at (4*\xx,2.5*\yy) {\stage{Gray}{$\phantom{;}$}};
\node (b8) at (4*\xx,0.5*\yy) {\stage{LimeGreen}{$\phantom{;}$}};

\node (a1) at (5*\xx,15*\yy) {\bt};
\node (a2) at (5*\xx,14*\yy) {\bt};
\node (a3) at (5*\xx,13*\yy) {\bt};
\node (a4) at (5*\xx,12*\yy) {\bt};
\node (a5) at (5*\xx,11*\yy) {\bt};
\node (a6) at (5*\xx,10*\yy) {\bt};
\node (a7) at (5*\xx,9*\yy) {\bt};
\node (a8) at (5*\xx,8*\yy) {\bt};
\node (a9) at (5*\xx,7*\yy) {\bt};
\node (a10) at (5*\xx,6*\yy) {\bt};
\node (a11) at (5*\xx,5*\yy) {\bt};
\node (a12) at (5*\xx,4*\yy) {\bt};
\node (a13) at (5*\xx,3*\yy) {\bt};
\node (a14) at (5*\xx,2*\yy) {\bt};
\node (a15) at (5*\xx,1*\yy) {\bt};
\node (a16) at (5*\xx,0*\yy) {\bt};

\draw[->] (r) -- node [above] {\ls{$s_0$}} (c1);
\draw[->] (r) -- node [below] {\ls{$s_1$}} (c2);
\draw[->] (r) -- node [above] {\ls{$s_2$}} (c3);
\draw[->] (r) -- node [below] {\ls{$s_3$}} (c4);

\draw[->] (c1) -- node [above] {\ls{$s_4$}} (b1);
\draw[->] (c1) -- node [below] {\ls{$s_5$}} (b2);
\draw[->] (c2) -- node [above] {} (b3);
\draw[->] (c2) -- node [below] {} (b4);
\draw[->] (c3) -- node [above] {\ls{$s_6$}} (b5);
\draw[->] (c3) -- node [below] {\ls{$s_7$}} (b6);
\draw[->] (c4) -- node [above] {} (b7);
\draw[->] (c4) -- node [below] {} (b8);

\draw[->] (b1) -- node [above] {\ls{$s_{8}$}} (a1);
\draw[->] (b1) -- node [below] {\ls{$s_{9}$}} (a2);
\draw[->] (b2) -- node [above] {} (a3);
\draw[->] (b2) -- node [below] {} (a4);
\draw[->] (b3) -- node [above] {} (a5);
\draw[->] (b3) -- node [above] {} (a6);
\draw[->] (b4) -- node [above] {} (a7);
\draw[->] (b4) -- node [below] {} (a8);
\draw[->] (b5) -- node [above] {\ls{$s_{10}$}} (a9);
\draw[->] (b5) -- node [below] {\ls{$s_{11}$}} (a10);
\draw[->] (b6) -- node [above] {} (a11);
\draw[->] (b6) -- node [above] {} (a12);
\draw[->] (b7) -- node [above] {} (a13);
\draw[->] (b7) -- node [above] {} (a14);
\draw[->] (b8) -- node [above] {\ls{$s_{12}$}} (a15);
\draw[->] (b8) -- node [below] {\ls{$s_{13}$}} (a16);


\node [right, xshift=5] at (a1) {$p_{000}$};
\node [right, xshift=5] at (a2) {$p_{001}$};
\node [right, xshift=5] at (a3) {$p_{010}$};
\node [right, xshift=5] at (a4) {$p_{011}$};
\node [right, xshift=5] at (a5) {$p_{100}$};
\node [right, xshift=5] at (a6) {$p_{101}$};
\node [right, xshift=5] at (a7) {$p_{110}$};
\node [right, xshift=5] at (a8) {$p_{111}$};
\node [right, xshift=5] at (a9) {$p_{200}$};
\node [right, xshift=5] at (a10){$p_{201}$};
\node [right, xshift=5] at (a11){$p_{210}$};
\node [right, xshift=5] at (a12){$p_{211}$};
\node [right, xshift=5] at (a13){$p_{300}$};
\node [right, xshift=5] at (a14){$p_{301}$};
\node [right, xshift=5] at (a15){$p_{310}$};
\node [right, xshift=5] at (a16){$p_{311}$};

\end{tikzpicture} \,
\begin{tikzpicture}
\renewcommand{\xx}{1}
\renewcommand{\yy}{0.5}
\node at (2*\xx,14*\yy) {$\T_3:$};

\node (r) at (2*\xx,7.5*\yy) {\bt};

\node (c1) at (3*\xx,14.3*\yy) {\small$0$};
\node (c2) at (3*\xx,10.3*\yy) {\small$1$};
\node (c3) at (3*\xx,6.3*\yy) {\small$2$};
\node (c4) at (3*\xx,2.3*\yy) {\small$3$};

\node (c1) at (3*\xx,13.5*\yy) {\stage{Tan}{$\phantom{;}$}};
\node (c2) at (3*\xx,9.5*\yy) {\stage{Tan}{$\phantom{;}$}};
\node (c3) at (3*\xx,5.5*\yy) {\stage{VioletRed}{$\phantom{;}$}};
\node (c4) at (3*\xx,1.5*\yy) {\stage{VioletRed}{$\phantom{;}$}};

\node (b1) at (4*\xx,15.3*\yy) {\small $00$};
\node (b2) at (4*\xx,13.3*\yy) {\small $01$};
\node (b3) at (4*\xx,11.3*\yy) {\small $10$};
\node (b4) at (4*\xx,9.3*\yy) {\small $11$};
\node (b5) at (4*\xx,7.3*\yy) {\small $20$};
\node (b6) at (4*\xx,5.3*\yy) {\small $21$};
\node (b7) at (4*\xx,3.3*\yy) {\small $30$};
\node (b8) at (4*\xx,1.3*\yy) {\small $31$};

\node (b1) at (4*\xx,14.5*\yy) {\stage{Cyan}{$\phantom{;}$}};
\node (b2) at (4*\xx,12.5*\yy) {\stage{Cyan}{$\phantom{;}$}};
\node (b3) at (4*\xx,10.5*\yy) {\stage{Cyan}{$\phantom{;}$}};
\node (b4) at (4*\xx,8.5*\yy) {\stage{Gray}{$\phantom{;}$}};
\node (b5) at (4*\xx,6.5*\yy) {\stage{Gray}{$\phantom{;}$}};
\node (b6) at (4*\xx,4.5*\yy) {\stage{Gray}{$\phantom{;}$}};
\node (b7) at (4*\xx,2.5*\yy) {\stage{Gray}{$\phantom{;}$}};
\node (b8) at (4*\xx,0.5*\yy) {\stage{Cyan}{$\phantom{;}$}};

\node (a1) at (5*\xx,15*\yy) {\bt};
\node (a2) at (5*\xx,14*\yy) {\bt};
\node (a3) at (5*\xx,13*\yy) {\bt};
\node (a4) at (5*\xx,12*\yy) {\bt};
\node (a5) at (5*\xx,11*\yy) {\bt};
\node (a6) at (5*\xx,10*\yy) {\bt};
\node (a7) at (5*\xx,9*\yy) {\bt};
\node (a8) at (5*\xx,8*\yy) {\bt};
\node (a9) at (5*\xx,7*\yy) {\bt};
\node (a10) at (5*\xx,6*\yy) {\bt};
\node (a11) at (5*\xx,5*\yy) {\bt};
\node (a12) at (5*\xx,4*\yy) {\bt};
\node (a13) at (5*\xx,3*\yy) {\bt};
\node (a14) at (5*\xx,2*\yy) {\bt};
\node (a15) at (5*\xx,1*\yy) {\bt};
\node (a16) at (5*\xx,0*\yy) {\bt};

\draw[->] (r) -- node [above] {\ls{$s_0$}} (c1);
\draw[->] (r) -- node [below] {\ls{$s_1$}} (c2);
\draw[->] (r) -- node [above] {\ls{$s_2$}} (c3);
\draw[->] (r) -- node [below] {\ls{$s_3$}} (c4);

\draw[->] (c1) -- node [above] {\ls{$s_4$}} (b1);
\draw[->] (c1) -- node [below] {\ls{$s_5$}} (b2);
\draw[->] (c2) -- node [above] {} (b3);
\draw[->] (c2) -- node [below] {} (b4);
\draw[->] (c3) -- node [above] {\ls{$s_6$}} (b5);
\draw[->] (c3) -- node [below] {\ls{$s_7$}} (b6);
\draw[->] (c4) -- node [above] {} (b7);
\draw[->] (c4) -- node [below] {} (b8);

\draw[->] (b1) -- node [above] {\ls{$s_{8}$}} (a1);
\draw[->] (b1) -- node [below] {\ls{$s_{9}$}} (a2);
\draw[->] (b2) -- node [above] {} (a3);
\draw[->] (b2) -- node [below] {} (a4);
\draw[->] (b3) -- node [above] {} (a5);
\draw[->] (b3) -- node [above] {} (a6);
\draw[->] (b4) -- node [above] {} (a7);
\draw[->] (b4) -- node [below] {} (a8);
\draw[->] (b5) -- node [above] {\ls{$s_{10}$}} (a9);
\draw[->] (b5) -- node [below] {\ls{$s_{11}$}} (a10);
\draw[->] (b6) -- node [above] {} (a11);
\draw[->] (b6) -- node [above] {} (a12);
\draw[->] (b7) -- node [above] {} (a13);
\draw[->] (b7) -- node [above] {} (a14);
\draw[->] (b8) -- node [above] {} (a15);
\draw[->] (b8) -- node [below] {} (a16);


\node [right, xshift=5] at (a1) {$p_{000}$};
\node [right, xshift=5] at (a2) {$p_{001}$};
\node [right, xshift=5] at (a3) {$p_{010}$};
\node [right, xshift=5] at (a4) {$p_{011}$};
\node [right, xshift=5] at (a5) {$p_{100}$};
\node [right, xshift=5] at (a6) {$p_{101}$};
\node [right, xshift=5] at (a7) {$p_{110}$};
\node [right, xshift=5] at (a8) {$p_{111}$};
\node [right, xshift=5] at (a9) {$p_{200}$};
\node [right, xshift=5] at (a10){$p_{201}$};
\node [right, xshift=5] at (a11){$p_{210}$};
\node [right, xshift=5] at (a12){$p_{211}$};
\node [right, xshift=5] at (a13){$p_{300}$};
\node [right, xshift=5] at (a14){$p_{301}$};
\node [right, xshift=5] at (a15){$p_{310}$};
\node [right, xshift=5] at (a16){$p_{311}$};

\end{tikzpicture}

\end{center} 
\caption{Three examples of staged trees. In each tree two vertices with the same color are in the same
stage.}\label{fig:trees}
\end{figure}

We are interested in relating the combinatorial properties of the staged tree $(\T,\theta)$ with the properties
of the toric ideal $\ker(\varphi_{\T})$. Before we dive into the combinatorics of staged
trees we present our main Theorem~\ref{thm:main}. In its statement we use the notion of \emph{balanced} staged tree  and of \emph{stratified} staged tree. 
\begin{theorem}\label{thm:main}
If $(\T,\theta)$ is a balanced and stratified staged tree then $\ker(\varphi_{\T})$ is generated by a 
quadratic Gr\"obner basis with squarefree initial ideal. 
\end{theorem}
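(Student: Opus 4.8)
The plan is to argue by induction on the number of vertices of $\T$, using the toric fiber product construction of Section~\ref{sec:tfps} together with Sullivant's results on Gr\"obner bases of toric fiber products \cite{Sullivant2007}. For the base case one takes $\T$ to have depth one (a single floret): then, after accounting for the homogenizing variable $z$, the coordinates of the parametrization \eqref{eq:param} are distinct monomials in independent indeterminates, so $\ker(\varphi_{\T})=(0)$, which is trivially generated by the empty set, a quadratic Gr\"obner basis with squarefree initial ideal.

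For the inductive step, let $(\T,\theta)$ be balanced and stratified with more than one non-leaf vertex. The first task, carried out in Section~\ref{sec:tfps}, is to exhibit a decomposition $\ker(\varphi_{\T})=\ker(\varphi_{\T_1})\times_{\A}\ker(\varphi_{\T_2})$, where $\T_1$ and $\T_2$ are obtained by cutting $\T$ along a level: $\T_1$ is the truncation of $\T$ above the cut and $\T_2$ records the subtrees hanging below it (collected by stage), while the multidegrees $\A$ indexing the product are read off from the stages at the cut. Here the \emph{stratified} hypothesis is what makes the setup legitimate: since each stage lies at a single level, the levels partition the label set $\la$, so the label-monomial $\prod_{e\in E(\lambda)}\theta(e)$ of a root-to-leaf path $\lambda$ recovers $\lambda$ and hence its endpoint; this makes the grading on $\R[p]_{\T_1}$ by which vertex (or position) a path reaches well defined, and $\ker(\varphi_{\T_1})$ homogeneous with respect to it. The \emph{balanced} hypothesis is what guarantees that the resulting toric fiber product is of the codimension-zero type — equivalently that the set $\A$ of multidegrees is linearly independent — and that $\ker(\varphi_{\T})$ equals, rather than merely contains, the fiber product ideal. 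One then checks that the pieces inherit the hypotheses: $\T_1$ is again balanced and stratified (truncation preserves both, the key point being that balancedness of a stratified tree can be verified level by level), and $\T_2$ is a disjoint union of florets, so $\ker(\varphi_{\T_2})=(0)$ and $\T_1$ has strictly fewer vertices than $\T$, so the inductive hypothesis applies.

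With the decomposition in hand, the conclusion follows from \cite{Sullivant2007}: when $\A$ is linearly independent and each of $\ker(\varphi_{\T_1})$, $\ker(\varphi_{\T_2})$ admits a quadratic Gr\"obner basis with squarefree initial ideal — which holds by the inductive hypothesis — then, for a term order built compatibly from the term orders on the two factors, the union of the two Gr\"obner bases together with the quadratic binomials $\Quad$ coming from the fiber product is a Gr\"obner basis of $\ker(\varphi_{\T_1})\times_{\A}\ker(\varphi_{\T_2})$. All of these generators are quadratic, and their leading terms are squarefree (the initial terms of the lifted basis elements are lifts of squarefree quadratic monomials, hence squarefree, and the leading terms of $\Quad$ are squarefree quadrics), so $\ker(\varphi_{\T})$ is generated by a quadratic Gr\"obner basis with squarefree initial ideal, completing the induction.

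The main obstacle is the middle step: proving that the combinatorial cut actually produces a \emph{codimension-zero} toric fiber product whose fiber product ideal coincides with $\ker(\varphi_{\T})$. This is precisely where the \emph{balanced} condition must be invoked, and it requires a careful comparison of the monomial parametrization of $\T$ with those of $\T_1$ and $\T_2$, a check that the chosen term orders satisfy the compatibility needed in Sullivant's gluing theorem, and the bookkeeping that shows both pieces remain balanced and stratified so that the induction closes.
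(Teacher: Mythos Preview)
Your overall architecture is exactly the paper's: peel off the last level, recognize $\ker(\varphi_{\T})$ as the toric fiber product $\ker(\varphi_{\T_1})\times_{\A}\langle 0\rangle$ (Proposition~\ref{lem:tfp}), apply Sullivant's Theorem~\ref{thm:tfp} to lift a Gr\"obner basis, and induct on the level. The base case, the observation that the gluing component has zero ideal, and the squarefreeness of the lifted leading terms are all handled as in the paper.

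Where you go off track is in allocating the hypotheses. The set $\A$ of multidegrees is \emph{always} linearly independent here: by construction $\deg(p_{\bm j}^i)=\bm e_i$, standard unit vectors in $\Z^r$. Balancedness has nothing to do with that. What balancedness actually buys you---and what you wrongly credit to the stratified hypothesis---is that $\ker(\varphi_{\T_1})$ is $\A$-homogeneous, i.e.\ that its quadratic generators respect the grading by stages at the cut. This is the hypothesis of Proposition~\ref{lem:tfp}, and without it the fiber product ideal can be strictly smaller than $\ker(\varphi_{\T})$; the stratified but unbalanced trees $\T_2,\T_3$ in Figure~\ref{fig:trees} are explicit counterexamples. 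The paper isolates this step as Proposition~\ref{prop:extensions}: if $\T_i$ and $\T_{i+1}$ are balanced, then every element of the inductively built Gr\"obner basis of $\ker(\varphi_{\T_i})$ is $\A_i$-graded. The argument uses condition~$(\star)$ at the relevant stage together with Lemma~\ref{lem:subbalanced} (truncations of balanced stratified trees remain balanced) to match up the interpolating polynomials $t(\cdot)$ on either side of each binomial, and the stratified hypothesis enters only to separate variables across levels so that the resulting polynomial identity forces the degree equality term by term.

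So your last paragraph correctly names the obstacle, but your second paragraph misdiagnoses which hypothesis resolves it. Rewrite that passage so that stratified gives you the level-by-level decomposition and the disjointness of label sets across levels, while balanced gives you the $\A$-homogeneity of the truncated ideal (hence the equality in Proposition~\ref{lem:tfp} and the applicability of Theorem~\ref{thm:tfp}); then the induction closes exactly as you outline.
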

We clarify that the conditions of $(\T,\theta)$ being balanced and stratified
in Theorem~\ref{thm:main} are sufficient for $\ker(\varphi_{\T})$ to have a quadratic Gr\"obner basis
but are not necessary.
In the examples of staged trees in Figure~\ref{fig:trees}, all of the trees $\T_1,\T_2,\T_3$
are stratified but only $\T_1$ is balanced. Even though $\T_2$ is not balanced, it has
a quadratic Gr\"obner basis with squarefree initial terms.

\begin{definition}
Let $\T$ be a tree. For $v\in V$, the \emph{level} of $v$ is denoted by $\ell(v)$ and it 
is equal to the number of
edges in the unique path from the root of $\T$ to $v$. If all the leaves in $\T$ have the
same level then the level of $\T$ is equal to the level of any of its leaves. The staged tree $(\T,\theta)$ is \emph{stratified} if
all its leaves have the same level and if every two vertices in the same stage have the same level.
\end{definition}

It is easy to check that the trees in Figure~\ref{fig:trees} are stratified. Namely we only need
to verify that every two vertices with the same color are also in the same level. Notice that the
combinatorial condition of
$(\T,\theta)$ being stratified translates into the algebraic condition that the map $\varphi_{\T}$ is
squarefree.

We now turn our attention to the definition of a balanced staged tree. 
This definition is formulated
in terms of polynomials associated to each vertex of the tree. We proceed to explain their notation
and basic properties. 
\begin{definition}\label{def:inter}
Let $(\T,\theta)$ be a staged tree, $v \in V$, and  $\T_{v}$ the subtree of $\T$ rooted at $v$. The tree
 $\T_{v}$ is a staged tree with the induced labelling from $\T$.
 Let $\Lambda_{v} $ be the set of $v$-to-leaf paths in $\T$ and define
\[t(v):=\sum_{\lambda\in \Lambda_{v}} \prod_{e\in E(\lambda)} \theta(e). \] 
When $v$ is the root of $\T$, the polynomial $t(v)$ is called the \emph{interpolating polynomial} of
$\T$. Two staged trees $(\T,\theta)$ and $(\T,\theta')$
with the same label set $\la$ are \emph{polynomially equivalent} if their interpolating polynomials are equal. 
\end{definition}

The interpolating polynomial of a staged tree is an important tool in the study of the statistical 
properties of staged tree models. 
 This polynomial was defined by G\"orgen and
Smith in \cite{GSmith} and further studied by G\"orgen et al. in \cite{Gdiscovery}. Although these two articles
are written for a statistical audience, we would like to emphasize that their symbolic algebra
approach to the study of statistical models proves to be very important for the use of these models
in practice. We will define the statistical model associated to a staged tree and connect Theorem~\ref{thm:main} to other results in Algebraic Statistics in Section~\ref{sec:stats}.

If $(\T,\theta)$ is a staged tree, the polynomials $t(\cdot)$ satisfy a recursive relation. This
relation is useful to prove statements about the algebraic and combinatorial properties of
$\T$. We state this property as a lemma.
\begin{lemma}[{\cite[Theorem 1]{Gdiscovery} }]\label{lem:recursive}
Let $(\T,\theta)$ be a staged tree, $v\in V$ and $\mathrm{ch}(v)=\{v_0,\ldots,v_k\}$. Then the polynomial
$t(v)$ admits the recursive representation 
$t(v)=\sum_{i=0}^k\theta(v,v_i)t(v_i).$
\end{lemma}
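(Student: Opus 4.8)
The plan is to prove the identity by partitioning the set $\Lambda_{v}$ of $v$-to-leaf paths according to the first edge traversed. Since the notation $\mathrm{ch}(v)=\{v_0,\ldots,v_k\}$ presupposes that $v$ is not a leaf, every path $\lambda\in\Lambda_{v}$ leaves $v$ along exactly one outgoing edge $(v,v_i)$ with $i\in\{0,\ldots,k\}$, and deleting this first edge from $\lambda$ produces a $v_i$-to-leaf path. Conversely, prepending the edge $(v,v_i)$ to any element of $\Lambda_{v_i}$ returns an element of $\Lambda_{v}$. First I would record this as a bijection $\Lambda_{v}\cong\bigsqcup_{i=0}^{k}\Lambda_{v_i}$ under which, if $\lambda$ corresponds to $\lambda_i\in\Lambda_{v_i}$, then $E(\lambda)=\{(v,v_i)\}\sqcup E(\lambda_i)$.

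Next I would substitute this decomposition into the defining sum for $t(v)$. Since the product in the definition ranges over the edge set of a path, for $\lambda\leftrightarrow\lambda_i$ one has $\prod_{e\in E(\lambda)}\theta(e)=\theta(v,v_i)\prod_{e\in E(\lambda_i)}\theta(e)$, and therefore
\[
t(v)=\sum_{i=0}^{k}\sum_{\lambda_i\in\Lambda_{v_i}}\theta(v,v_i)\prod_{e\in E(\lambda_i)}\theta(e)=\sum_{i=0}^{k}\theta(v,v_i)\sum_{\lambda_i\in\Lambda_{v_i}}\prod_{e\in E(\lambda_i)}\theta(e)=\sum_{i=0}^{k}\theta(v,v_i)\,t(v_i),
\]
which is the claimed recursion; pulling $\theta(v,v_i)$ out of the inner sum is legitimate because it does not depend on $\lambda_i$.

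The argument is essentially bookkeeping, so I do not expect a genuine obstacle; the only point requiring a little care is the convention at the leaves, where $\Lambda_{v}$ consists of the single empty path and the empty product gives $t(v)=1$, which is consistent with the lemma being stated only for non-leaf $v$. Since this statement already appears as \cite[Theorem 1]{Gdiscovery}, the proof is included only for completeness and to fix the notation used in the sequel.
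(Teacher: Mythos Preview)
Your argument is correct: partitioning $\Lambda_v$ by the first edge and using $E(\lambda)=\{(v,v_i)\}\sqcup E(\lambda_i)$ is exactly the right bookkeeping, and the leaf convention $t(v)=1$ is handled properly. The paper itself does not supply a proof of this lemma---it merely cites \cite[Theorem~1]{Gdiscovery}---so there is nothing to compare against; your write-up would serve well as the ``for completeness'' proof you describe.
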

\begin{example}
Consider the staged tree $\T_1$ in Figure~\ref{fig:trees}. If $v$ and $w$ are orange and blue vertices in
$\T_1$ respectively and $r$ is the root of $\T_1$ then $t(v)=s_6(s_{10}+s_{11})+s_7(s_{12}+s_{13})$,
$t(w)=s_8(s_{10}+s_{11})+s_9(s_{12}+s_{13})$, and $t(r)=(s_0s_2+s_1s_4)t(v)+(s_0s_3+s_1s_5)t(w)$.

\end{example}

\begin{definition}\label{def:star}
Let $(\T,\theta)$ be a staged tree and $v,w$ be two
vertices in the same stage with  $\mathrm{ch}(v)=\{v_0,\ldots, v_k\}$ and
 $\mathrm{ch}(w)=\{w_0,\ldots,w_k\}$. 
After a possible  reindexing of the elements in $\mathrm{ch}(w)$
we may assume that $\theta(v,v_i)=\theta(w,w_i)$ for all $i \in \{0,\ldots, k\}$. The vertices $v,w$ satisfy condition $(\star)$ if
\begin{align} \label{eq:star}
t(v_i)t( w_j)=t(w_i)t(v_j) \text{ in } \R[\Theta]_{\T}, \text{ for all } i\neq j \in  \{0,\ldots, k\} .
\end{align}
The staged tree $(\T,\theta)$ is \emph{balanced} if every pair of vertices  in the same stage  satisfy condition $(\star)$.
\end{definition}

\begin{example}
The two staged trees $\T_2,\T_3$ in Figure~\ref{fig:trees} are not balanced. The two pink vertices in $\T_2$
do not satisfy condition $(\star)$ because $(s_{10}+s_{11})(s_{12}+s_{13})\neq (s_{10}+s_{11})^2$.
By a similar argument we can check that $\T_3$ is also not balanced.
\end{example}

Although condition $(\star)$ seems to be algebraic and hard to check, in many cases it is very combinatorial. To formulate a precise instance where this is true we need the following definition.

\begin{definition}
Let $(\T,\theta)$ be a staged tree. We say that two vertices $v,w\in V$ are in the same \emph{position} if
they are in the same stage and
$t(v)=t(w)$. 
\end{definition}

The notion of position for vertices in the same stage
was formulated in \cite{SmithAnderson}. Intuitively it means that if we regard
the subtrees $\T_v$ and $\T_w$ as representing the unfolding of a sequence of events, then the future
of $v$ and $w$ is essentially  the same.  In the next lemma we use positions of vertices to provide 
a sufficient condition on a stratified staged tree $(\T,\theta)$ to be balanced.

\begin{lemma} \label{lem:positions}
Let $(\T,\theta)$ be a stratified staged tree. Suppose that every two vertices in $\T$ that are in the same stage
are also in the same position. Then $(\T,\theta)$ is balanced.
\end{lemma}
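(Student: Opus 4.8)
The plan is to establish something slightly stronger than condition $(\star)$: I will show that whenever $v,w$ lie in the same stage and their children are indexed so that $\theta(v,v_i)=\theta(w,w_i)$ for all $i\in\{0,\dots,k\}$, then in fact $t(v_i)=t(w_i)$ for every $i$. Granting this, condition $(\star)$ follows at once, since for $i\neq j$ both sides of \eqref{eq:star} reduce to $t(v_i)t(v_j)$, and there is nothing left to check. (When $v,w$ are leaves the condition is vacuous, as they have no children.)

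To obtain the equalities $t(v_i)=t(w_i)$, the first step is to invoke the hypothesis: since $v$ and $w$ are in the same stage they are in the same position, hence $t(v)=t(w)$. Expanding both sides with the recursion of Lemma~\ref{lem:recursive} and using $\theta(v,v_i)=\theta(w,w_i)$ yields the single relation
\[
\sum_{i=0}^{k}\theta(v,v_i)\,\bigl(t(v_i)-t(w_i)\bigr)=0\qquad\text{in }\R[\Theta]_{\T}.
\]
The second step, which I expect to be the crux, is the claim that in a stratified staged tree the label $\theta(v,v_i)$ occurs in none of the polynomials $t(v_j),t(w_j)$ for $0\le j\le k$. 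Indeed, if $\theta(v,v_i)$ labelled some edge out of a vertex $u$ in the subtree $\T_{v_j}$, then $\theta(v,v_i)\in\theta_v\cap\theta_u$, so Definition~\ref{def:stagedtree}(2) forces $\theta_u=\theta_v$; thus $u$ and $v$ are in the same stage, and by stratifiedness $\ell(u)=\ell(v)$, contradicting that $u$ lies in $\T_{v_j}$ with $v_j\in\mathrm{ch}(v)$ and hence $\ell(u)\ge\ell(v)+1$. The same reasoning handles $\T_{w_j}$ via $\theta(v,v_i)=\theta(w,w_i)\in\theta_w$. Since moreover $\theta(v,v_0),\dots,\theta(v,v_k)$ are pairwise distinct by Definition~\ref{def:stagedtree}(1), the displayed relation is a polynomial identity that is linear in these distinct indeterminates with coefficients $t(v_i)-t(w_i)$ lying in the subring generated by the remaining variables; comparing coefficients gives $t(v_i)=t(w_i)$ for all $i$, as wanted.

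The only genuine obstacle is the non-reappearance claim of the second step — showing that a label on an edge out of $v$ cannot occur again deeper inside $\T_v$ — since this is exactly what licenses the coefficient comparison, and it is the single place where the stratified hypothesis is used. Everything else is the recursion of Lemma~\ref{lem:recursive} together with routine bookkeeping, so I would keep those parts brief.
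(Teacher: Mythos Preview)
Your proof is correct and follows essentially the same route as the paper's: both arguments use the recursion of Lemma~\ref{lem:recursive} to turn $t(v)=t(w)$ into $\sum_i\theta(v,v_i)(t(v_i)-t(w_i))=0$, then invoke stratifiedness to separate variables and conclude $t(v_i)=t(w_i)$, from which condition~$(\star)$ is immediate. The only difference is that you spell out in detail why the labels $\theta(v,v_i)$ cannot reappear in the polynomials $t(v_j),t(w_j)$, whereas the paper simply asserts that stratifiedness makes the relevant variable sets disjoint.
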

\begin{proof}
Following Definition~\ref{def:star} it suffices to prove that any two vertices
in the same position satisfy condition $(\star)$. Let $v,w$ be two vertices in the same position,
by definition this means that they are in the same stage and $t(v)=t(w)$. Let $\mathrm{ch}(v)=\{v_0,\ldots,v_k\}$ and $\mathrm{ch}(w)=\{w_0,\ldots,w_k\}$, after possibly permuting the subindices
of elements in $\mathrm{ch}(w)$, we may assume $\theta(v,v_i)=\theta(w,w_i)$. Using Lemma~\ref{lem:recursive}
 we write
\begin{align*}
t(v)=t(w) \ifi \sum_{i=0}^k\theta(v,v_i)t(v_i)=\sum_{i=0}^k\theta(w,w_i)t(w_i) 
           \ifi\sum_{i=0}^k\theta(v,v_i)(t(v_i)-t(w_i))=0.
\end{align*}
Since $(\T,\theta)$ is stratified, the variables appearing in the polynomials $t(v_i),t(w_i)$
are disjoint from the set of variables $\{\theta(v,v_0),\ldots,\theta(v,v_k)\}$. Thus
$t(v_i)=t(w_i)$ for all $i\in \{0,\ldots,k\}$.
 It follows that for all $i,j\in\{0,\ldots,k\}$ the equality $t(v_i)t( w_j)=t(w_i)t(v_j) $ is true.
 Hence $(\T,\theta)$ is balanced.

\end{proof}

\begin{example}
The staged tree $\T_1$ in Figure~\ref{fig:trees} is balanced. This can be readily checked by noting that
the blue vertices are in the same position
and that the same is true for the orange vertices. The tree $\T$ in Figure~\ref{fig:asymtrees}
is an example of a balanced staged tree in which the blue vertices are not in the same position.
\end{example}

\section{Toric fiber products for staged trees} \label{sec:tfps}
In this section we define a toric fiber product for staged trees. We then use these results
in Section~\ref{sec:proofs} to
prove Theorem~\ref{thm:main}. We start with a review of toric fiber products following \cite{Sullivant2007}.



 Given a positive 
integer $m$, set $[m]=\{1,2,\ldots,m\}$. Let $r$ be a positive integer and let
$s$ and $t$ be two vectors of positive integers in $\Z^r_{>0}$.  Consider the homogeneous,
multigraded polynomial rings
\[
\mathbb{K}[x] := \mathbb{K}[x_j^i \mid i\in [r], j\in [s_i]  ]\quad
\text{and}\quad \mathbb{K}[y] := \mathbb{K}[y_k^i \mid i\in [r], k\in [t_i]  ]
\]
with the same multigrading
\[
\degree(x_j^i) = \degree(y_k^i) = \bm{a}^i \in \Z^d.
\]
Denote by $\mathcal{A} = \{ \bm{a}^1,\dots,\bm{a}^r \}$ the set of all
multidegrees of these variables and assume that there exists a vector
$w\in \mathbb{Q}^d$ such that $\<w,\bm{a}^i\>=1$ for any $\bm{a}^i\in
\mathcal{A}.$ If $I\subseteq \mathbb{K}[x]$ and $J\subseteq
\mathbb{K}[y]$ are homogeneous ideals, then the quotient rings $R =
\mathbb{K}[x]/I$ and $S=\mathbb{K}[y]/J$ are also multigraded
rings. Let
\[
\mathbb{K}[z] := \mathbb{K}[z_{jk}^i \mid i\in [r], j\in [s_i], k\in [t_i]]
\]
and consider the ring homomorphism
\begin{align*}
\phi_{I,J} : \mathbb{K}[z] &\rightarrow R \otimes_{\mathbb{K}} S \\
z_{jk}^i &\mapsto \overline{x_j^i} \otimes
\overline{y_k^i},
\end{align*}
where $\overline{x_j^i}$ and $\overline{y_k^i}$ are the equivalence
classes of $x_j^i$ and $y_k^i$ respectively.

\begin{definition}
  The \textit{toric fiber product} of $I$ and $J$ is
  $I\times_{\mathcal{A}} J := \mathrm{ker}( \phi_{I,J}).$ 
\end{definition}

Let $(\T,\theta)$ be a staged tree with root $r$.  We recursively define an indexing of the vertices in $V\setminus \{r\}$.
This identifies each vertex in $V\setminus\{r\}$ with a unique index. From this point on we refer to
an element  in $V$ via its index $\bm{a}$ or by $r$ in case the vertex is the root. The children of $r$ are indexed by $\{0,1,\dots,k\}.$  For
 $\bm{a}\in V\setminus\{r\}$, we index the children of
$\bm{a}$ as follows. If $E(\bm{a}) = \emptyset$ then
$\bm{a}$ is a leaf of the tree and we are done. If
$|E(\bm{a})| = j+1,$ index the children of $\bm{a}$
by $\bm{a}0,...,\bm{a}j.$ This way each vertex
in $V$  is indexed by a finite sequence of nonnegative integers
\[
\bm{a} = a_1a_2\cdots a_{\ell},
\]
where $\ell$ is the level of $\bm{a}$.    All vertices of the trees in Figure~\ref{fig:trees} are
indexed following this rule. In Figure~\ref{fig:trees} the index of each vertex is displayed immediately above each vertex
and on the side for the leaves. We denote by $\bm{i}_{\T}$ the set of
indices of the leaves in $\T$.

\begin{definition}\label{def:onelevel}
Let $(\T,\theta)$ be a staged tree. If every leaf in $\T$ has level one then we call $\T$ a 
\emph{one-level tree}. We reserve for it the special notation $(\mathcal{B},t)$ where $t$
is the set of edge labels of $\mathcal{B}$.
\end{definition}

\begin{definition} \label{def:tnew}
Let $(\T,\theta)$ be a staged tree and $G=\{G_1,\ldots,G_r\}$ be a partition of the set of leaves $\bm{i}_{\T}$.
For each $i\in[r]$, let $(\B_{i},t^{(i)})$ be a one-level tree as in Definition~\ref{def:onelevel} with label set $t^{(i)}$. We
define the \emph{gluing component} $\T_{G}$ associated to $\T$ and $G$ as the disjoint union of $(\B_i,t^{(i)})$, namely 
\[
\T_G = \ds \bigsqcup_{i\in [r]} (\B_i,t^{(i)}).
\]
We denote by $[\T,\T_G]$ the tree obtained by gluing $\B_i$ to the leaf $\bm{a}$ for all $\bm{a}\in
G_i$ and all $i\in [r]$. 
\end{definition}

\begin{remark}
The labelling in $[\T,\T_{G}]$  is inherited from the labellings of $\T$ and $\T_G$. With this labelling  $[\T,\T_G]$ is a staged tree. Moreover,  $\bm{i}_{[\T,\T_{G}]}=
\{\bm{a}k\,|\, \bm{a}\in G_i, k\in \bm{i}_{\B_i}, i\in [r]\}$. The stages in $[\T,\T_{G}]$ are the ones inherited from
$\T$ union the new stages determined by $G$. This means that two vertices $\bm{a},\bm{b}\in \bm{i}_{\T}$
are in the same stage in $[\T,\T_G]$ provided $\bm{a},\bm{b}\in G_i$. 
\end{remark}

We relate $\ker(\varphi_{[\T,\T_G]})$ to the toric fiber product of the two ideals $\ker(\varphi_{\T})$
and the zero ideal $\langle 0\rangle$.
Let $\T,G,\T_G$ and $[\T,\T_G]$ be as in Definition~\ref{def:tnew}. First we associate to $\T_G$
the ring  $\R[p]_{\T_G}:=\R[p_k^i\,\mid\,  k\in \bm{i}_{\B_i}, i\in [r]]$ and the ring map
\begin{align*}
  \varphi_{\T_G}: \R[p]_{\T_G} &\rightarrow \R[\Theta]_{\T_G} \\ p_{k}^i
  &\mapsto t_k^{(i)}.
\end{align*}
Since there is a one-to-one correspondence between the variables $p_k^i$ and $t_k^{(i)}$, we see that $\varphi_{\T_G}$
is an isomorphism. In particular, $\ker(\varphi_{\T_G})=\langle 0\rangle $.  Second,  using $G$ we  regroup the variables in
$\R[p]_{\T}$ by
\[
\R[p]_{\T} = \R[p_{\bm{j}}^i \mid \bm{j}\in G_i,\, i\in
  [r]].
\]

We define the multigrading on the polynomial rings $\R[p]_{\T}$ and
$\R[p]_{\T_G}$ as
\[
 \degree(p_{\bm{j}}^i) = \degree(p_{k}^i) = \bm{e}_i,\, \text{ for 
 } \, \bm{j}\in G_i,\, k\in \bm{i}_{\mathcal{B}_{i}}, \, i\in [r].
\]
Here $\bm{e}_i$ is the $i$th standard unit vector in $\Z^{r}$. If $\mathcal{A}$ is the set of all these
multidegrees, then $\mathcal{A}$ is linearly independent as it is a
collection of standard unit vectors in $\Z^r$. We say a homogeneous polynomial in
the ring $\R[p]_{\T}$ or $\R[p]_{\T_G}$ is $\A$-\emph{graded} whenever it is homogeneous with respect
to the multigrading determined by $\A$.

Fix  $R= \R[p]_{\T}/\ker(\varphi_{\T})$, $S=\R[p]_{\T_G}/\ker(\varphi_{\T_G})$ and let 
$\R[p]_{[\T,\T_G]}=\R[p_{\bm{j}k}^i \mid \, \bm{j}\in G_i,\, k\in \bm{i}_{\mathcal{B}_{i}},\, i\in [r]]$.
Consider the ring homomorphism
\begin{align}\label{eq:varsforTring}
\psi : \R[p]_{[\T,\T_G]} &\rightarrow R
\otimes_{\R} S \nonumber \\ p_{\bm{j}k}^i &\mapsto \overline{p_{\bm{j}}^i} \otimes
\overline{p_{k}^i},\quad \text{for }  \bm{j}\in G_i,\, k\in\bm{i}_{\B_i}\, i\in [r].
\end{align}
The ideal $\ker (\psi)=\ker(\varphi_{\T}) \times_{\mathcal{A}}\langle 0 \rangle$ is the
toric fiber product of $\ker(\varphi_{\T})$ and $\langle 0 \rangle$.
 \begin{proposition} \label{lem:tfp}
     Let $\T, G, \T_G$ and $[\T,\T_G]$ be as in Definition~\ref{def:tnew}. 
     Suppose that $\ker(\varphi_{\T})$ is homogeneous with respect to the multigrading given 
     by $\mathcal{A}$. Then 
     \[ \ker(\varphi_{[\T,\T_G]})= \ker(\varphi_{\T}) \times_{\mathcal{A}}\langle 0 \rangle.\] 
     \end{proposition}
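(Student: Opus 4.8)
The plan is to show the two ideals $\ker(\varphi_{[\T,\T_G]})$ and $\ker(\psi)$ coincide by comparing the monomial parametrizations they define. Both are kernels of ring maps out of $\R[p]_{[\T,\T_G]}=\R[p_{\bm{j}k}^i]$, so the standard approach is to exhibit a third map (a monomial parametrization) whose kernel is manifestly each of the two ideals in turn. Concretely, I would first spell out the composite $\varphi_{[\T,\T_G]}$: a leaf of $[\T,\T_G]$ indexed by $\bm jk$ (with $\bm j\in G_i$, $k\in\bm i_{\B_i}$) corresponds to a root-to-leaf path that splits as the root-to-$\bm j$ path in $\T$ followed by the single edge of $\B_i$ labelled by the $k$-th label of $t^{(i)}$, so
\[
\varphi_{[\T,\T_G]}(p_{\bm jk}^i) \;=\; z\cdot\Bigl(\textstyle\prod_{e\in E(\lambda_{\bm j})}\theta(e)\Bigr)\cdot t_k^{(i)},
\]
where $\lambda_{\bm j}$ is the root-to-$\bm j$ path in $\T$. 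Meanwhile $\varphi_{\T}(p_{\bm j}^i)=z\prod_{e\in E(\lambda_{\bm j})}\theta(e)$ and $\varphi_{\T_G}(p_k^i)=t_k^{(i)}$, so up to identifying the target ring $\R[\Theta]_{[\T,\T_G]}$ with a suitable subring of $\R[\Theta]_\T\otimes\R[\Theta]_{\T_G}$ the map $\varphi_{[\T,\T_G]}$ factors as "apply $\varphi_\T$ to the $\bm j$-part and $\varphi_{\T_G}$ to the $k$-part, then multiply". That is exactly the description of $\phi_{I,J}$ with $I=\ker(\varphi_\T)$, $J=\ker(\varphi_{\T_G})=\langle 0\rangle$.

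The second step is to verify the hypotheses of Sullivant's toric fiber product construction are in place: the rings $\R[p]_\T$ and $\R[p]_{\T_G}$ carry the common multigrading by $\A=\{\bm e_1,\dots,\bm e_r\}$ (established in the paragraph preceding the statement), $\ker(\varphi_\T)$ is $\A$-homogeneous by assumption, $\ker(\varphi_{\T_G})=\langle 0\rangle$ is trivially $\A$-homogeneous, and the required linear functional $w$ with $\langle w,\bm e_i\rangle=1$ exists (take $w=(1,\dots,1)$). With these checks, $\ker(\psi)=\ker(\varphi_\T)\times_{\A}\langle 0\rangle$ is literally the definition of the toric fiber product, so it remains only to prove $\ker(\psi)=\ker(\varphi_{[\T,\T_G]})$.

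For that equality I would argue as follows. Since $\varphi_{\T_G}$ is an isomorphism, $S=\R[p]_{\T_G}/\langle 0\rangle\cong\R[\Theta]_{\T_G}$, and $R=\R[p]_\T/\ker(\varphi_\T)\cong\operatorname{im}(\varphi_\T)\subseteq\R[\Theta]_\T$; hence $R\otimes_\R S$ embeds in $\R[\Theta]_\T\otimes_\R\R[\Theta]_{\T_G}$, and $\psi$ is the composite of this embedding with the map $p_{\bm jk}^i\mapsto \overline{p_{\bm j}^i}\otimes p_k^i$. Passing to the images, $\psi$ has the same kernel as the monomial map $p_{\bm jk}^i\mapsto \bigl(z\prod_{e\in E(\lambda_{\bm j})}\theta(e)\bigr)\otimes t_k^{(i)}$ into $\R[\Theta]_\T\otimes\R[\Theta]_{\T_G}$. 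Finally I identify this last map with $\varphi_{[\T,\T_G]}$: the monomials $\prod_{e\in E(\lambda_{\bm j})}\theta(e)\cdot t_k^{(i)}$ appearing in the image of $\varphi_{[\T,\T_G]}$ are in bijection with the pure tensors $\bigl(\prod_{e\in E(\lambda_{\bm j})}\theta(e)\bigr)\otimes t_k^{(i)}$ because the label set $\la$ of $\T$ and the label sets $t^{(i)}$ of the $\B_i$ are disjoint (the $\B_i$ contribute genuinely new edge labels), so the multiplication map $\R[\Theta]_\T\otimes\R[\Theta]_{\T_G}\to\R[\Theta]_{[\T,\T_G]}$ restricted to the relevant monomials is injective; one only has to be slightly careful about the homogenizing variable $z$, treating it consistently on both sides. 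Under this bijection the two monomial parametrizations agree, hence so do their kernels, giving $\ker(\varphi_{[\T,\T_G]})=\ker(\psi)=\ker(\varphi_\T)\times_{\A}\langle 0\rangle$.

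The main obstacle I anticipate is the bookkeeping in the last step: making the identification of the target ring $\R[\Theta]_{[\T,\T_G]}$ with (a subring of) the tensor product $\R[\Theta]_\T\otimes\R[\Theta]_{\T_G}$ fully rigorous, including the correct treatment of the homogenizing variable $z$ and the verification that distinct leaf-monomials of $[\T,\T_G]$ remain distinct as pure tensors (i.e. that no accidental coincidences are introduced by gluing). Everything else is a direct unwinding of definitions together with the already-noted facts that $\varphi_{\T_G}$ is an isomorphism and that $\ker(\varphi_\T)$ is $\A$-homogeneous.
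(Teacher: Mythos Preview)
Your proposal is correct and follows essentially the same route as the paper: factor the root-to-leaf monomial of $[\T,\T_G]$ as the $\T$-part times the new edge label $t_k^{(i)}$, observe that this exhibits $\varphi_{[\T,\T_G]}$ as factoring through $\psi$, and conclude using the $\A$-homogeneity hypothesis. The paper's argument is considerably terser---it simply asserts the factorization and the conclusion---whereas you spell out why the induced map $R\otimes_\R S\to\R[\Theta]_{[\T,\T_G]}$ is injective (disjointness of the label sets of $\T$ and $\T_G$), which is the point the paper leaves implicit; your anticipated ``obstacle'' about the homogenizing variable and distinctness of pure tensors is exactly this bookkeeping, and it goes through without difficulty.
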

      \begin{proof}
        For each $\bm{j}k\in \bm{i}_{[\T,\T_G]}$, the map $\varphi_{[\T,\T_G]}$ can be rewritten as 
        \[p_{\bm{j}k}\mapsto 
        z\cdot \prod_{e\in E( \lambda_{\bm{j}k})}\theta(e) = z\cdot\left(\prod_{e\in E(\lambda_{\bm{j}})}\theta(e)\right)
        \theta({\bm{j}},\bm{j}k)\]
         where $\theta({\bm{j}},\bm{j}k)=t_k^{(i)}$. Written in this factored form we see that $\varphi_{[\T,\T_G]}$ factors trough $\psi$. 
        Since $\ker(\varphi_{\T})$ is $\mathcal{A}$-homogeneous we conclude $\ker(\varphi_{[\T,\T_G]})=\ker(\varphi_{\T}) \times_{\mathcal{A}}\langle 0\rangle$. 

 \end{proof}
We make several remarks on the scope of Proposition~\ref{lem:tfp} via the next set of examples.
\begin{definition} \label{def:sublevel}
Let $(\T,\theta)$ be a stratified staged tree of level $m$. For $1\leq q\leq
m$ we define $V_{\leq q}:=\cup_{i=0}^q V_i$ where $V_q:=\{v\in V\,|\, \ell(v)= q\}$ and
$E_{\leq q}:=\{(v,w)\in E \,|\,
\ell(v)\leq q ,\ell(w)\leq q \}$. The staged tree $(\T^{(q)},\theta)$ is defined to be the subtree $\T^{(q)}$ 
of $\T$ with vertex set
$V_{\leq q}$, edge set $E_{\leq q}$ and labeling induced from $\T$.
\end{definition}
\begin{example} \label{ex:eqnst1}
Consider the staged tree $\T_1$ in Figure~\ref{fig:trees} and  let $\T=\T_1^{(3)}$ as in Definition~\ref{def:sublevel}. Then $\T$ is a staged tree with label set $\{s_0,\ldots,s_9\}$. 
Fix $G=\{\{000,010,100,110\},\{001,011,101,111\}\}$ 
and $\T_{G}=(\B_1,\{s_{10},s_{11}\})\sqcup(\B_2,\{s_{12},s_{13}\})$. With this choice
of $\T,G$ and $\T_G$ we see that $\T_1=[\T,\T_G]$.
Now 
$\R[p]_{\T}=\R[p_{\bm{a}}^i\,|\,  \bm{a}\in G_i, i\in\{1,2\}]$, hence
$\deg(p_{000}^1,p_{010}^1,p_{100}^1,p_{110}^1)=\bm{e}_1$ and $\deg(p_{001}^2,p_{011}^2,p_{101}^2,p_{111}^2)=
\bm{e}_2$ so $\A=\{\bm{e}_1,\bm{e_2}\}\subset \Z^2$ is of full rank.  The ideal
 \[\ker(\varphi_{\T})=\langle 
p_{000}^1p_{101}^2-p_{100}^1p_{011}^2,p_{010}^1p_{111}^2-p_{110}^1p_{011}^2\rangle\]
is $\mathcal{A}$-graded. Hence
by Proposition~\ref{lem:tfp} $\ker(\varphi_{\T_1})=\ker(\varphi_{\T})\times_{\mathcal{A}}\langle
0 \rangle$.
\end{example}

\begin{example}
Let $\T_2$ be the staged tree from Figure~\ref{fig:trees}. We proceed in a similar fashion as
in Example~\ref{ex:eqnst1}. Set $\T=\T_{2}^{(2)}$,  $G=\{\{00,01,10\},
\{11,31\},\{20,21,30\}\}$ and  $\T_{G}=(\B_1,\{s_{8},s_{9}\})\sqcup(\B_2,\{s_{12},s_{13}\})
\sqcup (\B_3,\{s_{10},s_{11}\})$. Then $\T_2=[\T,\T_G]$. The set $G$ defines a multigrading on $\R[p]_{\T}$
with $\mathcal{A}=\{\bm{e}_1,\bm{e}_2,\bm{e}_3\}\subset \Z^3$.
The ideal $\ker(\varphi_{\T})=\langle 
p_{00}^1p_{11}^2-p_{10}^1p_{01}^1,p_{20}^3p_{31}^2-p_{30}^3p_{21}^3\rangle$ is not $\A$-graded. Thus in this case 
$\ker(\varphi_{\T_2})\neq \ker(\varphi_{\T})\times_{\mathcal{A}}
\langle 0\rangle$.
\end{example}

\begin{example}
Let $\T_3$ be the staged tree from Figure~\ref{fig:trees} and $\T=\T_3^{(2)}$. As in the previous examples,
fix  $G=\{\{00,01,10,31\},\{20,21,30,11\}\}$ and $\T_{G}=(\B_1,\{s_8,s_9\})\sqcup (\B_2,
\{s_{10},s_{11}\})$ so $\T_3=[\T,\T_G]$. The set $G$ defines a multigrading on $\R[p]_{\T}$
with $\mathcal{A}=\{\bm{e}_1,\bm{e}_2\}\subset \Z^2$. The ideal 
$\ker(\varphi_{\T})=\langle 
p_{00}^1p_{11}^2-p_{10}^1p_{01}^1,p_{20}^2p_{31}^1-p_{30}^2p_{21}^2\rangle$ is
not $\A$-graded.
However there is a nonempty principal subideal of $\ker(\varphi_{\T})$ that
is $\mathcal{A}$-homogeneous. This principal ideal $Q$ is generated by the quartic 
$p_{00}^1p_{11}^2p_{20}^2p_{31}^1-p_{01}^1p_{10}^1p_{21}^2p_{30}^2$. 
In this case $\ker(\varphi_{\T_3})=Q\times_{\mathcal{A}}\langle 0 \rangle$. This does not fall in the context of Proposition~\ref{lem:tfp} since $\ker(\varphi_{\T_3})\neq \ker(\varphi_{\T})\times_{\mathcal{A}} \langle 0 \rangle$.
\end{example}
\section{Proof of main theorem} \label{sec:proofs}
The main ingredient in the proof of Theorem~\ref{thm:main}
is the toric fiber product. The idea of the proof is that
when $(\T,\theta)$ is balanced and stratified, we can construct $\ker(\varphi_{\T})$ in a finite number
of steps using Proposition~\ref{lem:tfp}.

Start with a one-level probability tree $\T_1$. Let $G^1$ be a partition of $\bm{i}_{\T_1}$,
$\T_{G^1}$ be a gluing component and set $\T_2=[\T_1,\T_{G^1}]$. In the inductive step,
$\T_{j+1}=[\T_j,\T_{G^j}]$ with $r_j:=|G^j|$. At
each step $j$ we also require that the label set of $\T_{G^j}$
is disjoint from the label set of $\T_j$. After $n$ iterations, we
obtain a stratified staged tree $\T_n$ whose set of stages is exactly $\cup_{j=1}^{n-1} G^j$.
Whenever a staged tree $(\T,\theta)$ is constructed in this way so $\T=\T_n$ for some $n$ we say $\T$ is an \emph{inductively 
constructed}
staged tree.

We recall a theorem from \cite{Sullivant2007} concerning the defining equations of a toric fiber product.
To state this theorem we use the notation for toric fiber products from Section~\ref{sec:tfps}.
\begin{theorem}[{\cite[Theorem~2.9]{Sullivant2007}}]\label{thm:tfp}
Suppose that $\mathcal{A}$ is linearly independent. Let $F\subset I$ be a homogeneous
Gr\"{o}bner basis for $I$ with respect to the weight vector $\omega_1$ and let $H\subset J$
be a homogeneous Gr\"{o}bner basis for $J$ with respect to the weight vector $\omega_2$. Let
$\omega$ be a weight vector such that $\Quad_B$ is a Gr\"{o}bner basis for $I_{B}$.
Then \[\Lift(F)\cup \Lift(H)\cup \Quad_B\]
is a Gr\"{o}bner basis for $I\times_{\mathcal{A}}J$ with respect to the weight order $\phi_{B}^{\ast}(
\omega_1,\omega_2)+ \epsilon \omega$ for sufficiently small $\epsilon>0$.
\end{theorem}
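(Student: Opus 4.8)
The statement to prove is Theorem~\ref{thm:tfp}, Sullivant's description of a Gr\"obner basis of a toric fiber product. Write $\mathcal{G}:=\Lift(F)\cup\Lift(H)\cup\Quad_B$ and let $\prec$ denote the weight order $\phi_{B}^{\ast}(\omega_1,\omega_2)+\epsilon\omega$. Every element of $\mathcal{G}$ lies in $I\times_{\mathcal{A}}J=\ker(\phi_{I,J})$: $\Lift(f)$ and $\Lift(h)$ are killed by $\phi_{I,J}$ by construction of the lift, since their $x$-, resp.\ $y$-part lies in $I$, resp.\ $J$, and the binomials of $\Quad_B$ are killed by commutativity of $R\otimes_{\mathbb{K}}S$. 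Hence $\langle\mathrm{in}_{\prec}(\mathcal{G})\rangle\subseteq\mathrm{in}_{\prec}(I\times_{\mathcal{A}}J)$, and the whole task is the reverse inclusion; once it holds, $\mathcal{G}$ is automatically a Gr\"obner basis and in particular a generating set. The plan is to identify the standard monomials of $\langle\mathrm{in}_{\prec}(\mathcal{G})\rangle$ and show they are also standard for $I\times_{\mathcal{A}}J$.

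The first step is to pin down leading terms. Since $\phi_B^{\ast}(\omega_1,\omega_2)$ assigns to $z_{jk}^i$ the sum of the $\omega_1$-weight of $x_j^i$ and the $\omega_2$-weight of $y_k^i$, a lift of an $x$-monomial keeps its $\omega_1$-weight, a lift of a $y$-monomial keeps its $\omega_2$-weight, and the two monomials of any binomial of $\Quad_B$ — which have the same block row- and column-margins — carry equal $\phi_B^{\ast}(\omega_1,\omega_2)$-weight. After the tie-break by $\epsilon\omega$ this gives $\mathrm{in}_{\prec}(\Lift(f))$ = lift of $\mathrm{in}_{\omega_1}(f)$ for $f\in F$, $\mathrm{in}_{\prec}(\Lift(h))$ = lift of $\mathrm{in}_{\omega_2}(h)$ for $h\in H$, and $\mathrm{in}_{\prec}(q)=\mathrm{in}_{\omega}(q)$ for $q\in\Quad_B$. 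Consequently a $z$-monomial $z^{\bm u}$ lies outside $\langle\mathrm{in}_{\prec}(\mathcal{G})\rangle$ exactly when (i) within each block $i$ its submonomial in the $z_{jk}^i$ is a standard monomial of $I_B$, and (ii) its $x$-margin monomial $\prod_{i,j}(x_j^i)^{\sum_k u_{jk}^i}$ and its $y$-margin monomial $\prod_{i,k}(y_k^i)^{\sum_j u_{jk}^i}$ are standard monomials of $I$ and $J$. The key input is that every fine multigraded component of $\mathbb{K}[z]/I_B$ is at most one-dimensional — it is a product over blocks of Segre-algebra components — so that within each block the standard monomial of $I_B$ is uniquely determined by its row- and column-margins; thus in (i) the block-couplings of $z^{\bm u}$ are forced by the margins appearing in (ii).

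The second step is to show the monomials satisfying (i)--(ii) remain linearly independent modulo $I\times_{\mathcal{A}}J$. Here $\phi_{I,J}(z^{\bm u})=\overline{x^{\bm r}}\otimes\overline{y^{\bm c}}$, with $x^{\bm r}$, $y^{\bm c}$ the $x$- and $y$-margin monomials of $z^{\bm u}$; so on the set of monomials of (i)--(ii) the assignment $z^{\bm u}\mapsto\phi_{I,J}(z^{\bm u})$ is identified with $(x^{\bm r},y^{\bm c})\mapsto\overline{x^{\bm r}}\otimes\overline{y^{\bm c}}$, running over pairs of standard monomials of $I$ and of $J$, which is a basis of $R\otimes_{\mathbb{K}}S$. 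Hence the monomials of (i)--(ii) are linearly independent modulo $I\times_{\mathcal{A}}J$, so they are standard monomials of $I\times_{\mathcal{A}}J$; with the easy inclusion this yields $\langle\mathrm{in}_{\prec}(\mathcal{G})\rangle=\mathrm{in}_{\prec}(I\times_{\mathcal{A}}J)$ and therefore that $\mathcal{G}$ is a Gr\"obner basis.

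I expect the delicate point to be the rigorous version of "the block-couplings are determined by the margins, so the $x$- and $y$-parts reduce independently" — that is, the genuine decoupling of the three pieces $F$, $H$, $\Quad_B$. This is precisely where the hypothesis that $\mathcal{A}$ is linearly independent is indispensable: it forces the multidegree of a $z$-monomial to record its total degree in each block separately, which is what makes the lift operation transverse to the quadratic relations. An alternative organization is to verify Buchberger's criterion for $\mathcal{G}$ and $\prec$ directly: S-pairs inside $\Quad_B$ reduce because $\Quad_B$ is a Gr\"obner basis of $I_B$ for $\omega$ and $\prec$ restricts to $\omega$ there; S-pairs inside $\Lift(F)$, resp.\ $\Lift(H)$, project to S-pairs of $F$, resp.\ $H$, whose standard reductions lift back after a sequence of $\Quad_B$-reductions correcting the coupling index; and the mixed S-pairs — the hard case — are dispatched, once again, by using linear independence of $\mathcal{A}$ to control how the leading monomials of a lift and of a $\Quad_B$-binomial overlap.
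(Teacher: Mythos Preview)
The paper does not prove Theorem~\ref{thm:tfp}; it is quoted from \cite[Theorem~2.9]{Sullivant2007} and invoked as a black box in the proof of Theorem~\ref{thm:main}. There is therefore no proof in this paper to compare your proposal against.

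That said, your sketch follows the same line as Sullivant's original argument: show $\mathcal{G}\subset I\times_{\mathcal{A}}J$, identify the leading terms of lifts and of $\Quad_B$ under the composite weight order, characterize the standard monomials of $\langle\mathrm{in}_{\prec}(\mathcal{G})\rangle$ via margin conditions plus the unique-coupling property of $I_B$, and then check these map to linearly independent elements of $R\otimes_{\mathbb{K}}S$. One small imprecision: the images $\overline{x^{\bm r}}\otimes\overline{y^{\bm c}}$ you obtain are not all of a basis of $R\otimes_{\mathbb{K}}S$, only those pairs whose $\mathcal{A}$-degrees match; but you only need linear independence, so this is harmless. The point you flag as delicate --- that the standard monomial of $I_B$ in each fine multidegree is unique, so the block coupling is forced by the margins --- is indeed where linear independence of $\mathcal{A}$ is used, and is the heart of Sullivant's proof as well.
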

We use this result to obtain a Gr\"obner basis of any inductively constructed staged tree. Theorem~\ref{thm:tfp}  has two important ingredients. The first is the set of equations denoted
by $\Quad_{B}$, these are quadratic equations that emerge from the construction of the toric fiber product. The second ingredient
is the set $\Lift(F)\cup \Lift(H)$, these are the lifts of generators of the ideals $I$ and $J$ respectively.
 
We start by explaining the construction of the elements in $\Quad_B$. Let $\T_j$ be an inductively constructed staged tree and $\T_{j+1}=[\T_j,\T_{G^j}]$ with
$r_j=|G^j|$. Consider the monomial map
\begin{align} \label{monomParam}
  \phi_{B_j} : \R[p]_{\T_{j+1}} &\rightarrow \R[p_{\bm{a}}^i,p_k^i \mid 
    \bm{a}\in G_i^j, k\in \bm{i}_{\B_i^j},\, i\in [r_j]] \nonumber \\ p_{\bm{a}k}^i
  &\mapsto p_{\bm{a}}^i p_k^i
\end{align}
where $B_j$ denotes the exponent matrix of the monomial map $\phi_{B_j}$. Set $I_{B_j}=
\ker(\phi_{B_j})$ and
\[
\Quad_{B_j} = \{
\underline{p^i_{\bm{a}k_1}p^i_{\bm{b}k_2}}-p^i_{\bm{b}k_1}p^i_{\bm{a}k_2}
\mid  \bm{a},\bm{b}\in G_i^j, k_1\neq k_2\in \bm{i}_{\B_i^j}, i\in [r_j] \}.
\] By Proposition~10 in
\cite{Sullivant2007} $\Quad_{B_j}$ is a Gr\"{o}bner basis for $I_{B_j}$ with respect to
any term order that selects the underlined terms as leading terms.

We now explain the construction of the elements in $\Lift(F)\cup \Lift(H)$. Fix $\T, G, \T_G$ and $[\T,\T_G]$ as in Definition~\ref{def:tnew} and denote by $\mathcal{A}$ the multigrading of the rings
 $\R[p]_{\T},\R[p]_{[\T,\T_G]}$ defined by $G$. We recall the definition of a lift of an $\mathcal{A}$-graded polynomial in $\R[p]_{\T}$ to the polynomial ring
$\R[p]_{[\T,\T_G]}$ of the toric fiber product. Since we only consider pure quadratic binomials, we 
restrict the definition from \cite{Sullivant2007} of lift to this particular case. 
Consider the $\A$-graded polynomial \[f=p_{\bm{a}_1}^{i_1}p_{\bm{a}_2}^{i_2}-p_{\bm{a}_3}^{i_1}p_{\bm{a}_4}^{i_2}\in \R[p]_{\T},\]
where  $
    \bm{a}_1,\bm{a}_3\in G_{i_1}$, $ \bm{a}_2,\bm{a}_4\in G_{i_2}$ and $ i_1,i_2\in[r]$. Set $k=(k_1,k_2)$
    with $k_1\in \bm{i}_{\B_{i_1}}, k_2\in \bm{i}_{\B_{i_2}}$ and consider $f_k\in \R[p]_{[\T,\T_G]}$ defined
    by \[f_k=  p_{\bm{a}_1k_1}^{i_1}p_{\bm{a}_2k_2}^{i_2}-p_{\bm{a}_3k_1}^{i_1}p_{\bm{a}_4k_2}^{i_2}.\]
\begin{definition}
Let $\mathcal{A}$ be the multigrading of the rings
 $\R[p]_{\T},\R[p]_{[\T,\T_G]}$ defined by $G$ and let $F \in \ker(\varphi_{\T})$ be a collection of pure $\A$-graded binomials.
We associate to each $f\in F$ the set $T_f=\bm{i}_{\B_{i_1}}\times\bm{i}_{\B_{i_2}}$ of indices and define
\[\Lift(F)=\{f_k: f\in F, k\in T_f\}.\]
The set $\Lift(F)$ is called the lifting of $F$ to $\ker(\varphi_{\T})\times_{\A} \langle 0 \rangle$ (see \cite{Sullivant2007}). 
\end{definition}

\begin{definition} Let $\T_n$ be an inductively constructed staged tree with $\T_i=[\T_{i-1},\T_{G^i}]$
for $1\leq i\leq n$ and let $\A_i$ be the grading in $\R[p]_{\T_i}$ determined by $G^i$.
Fix two nonnegative integers $i,q$ with $0\leq i+q\leq n-1$ . We define
\begin{align*}
\Lift^{q}(\Quad_{B_i}):= \Lift_{\mathcal{A}_{i+q}}(\cdots(\Lift_{\mathcal{A}_{i+2}}(\Lift_{\mathcal{A}_{i+1}}(\Quad_{B_i})))\cdots)
\end{align*}
where the subscript in $\Lift_{\mathcal{A}}(\,\cdot\,)$ indicates the grading
of the argument is with respect to $\A$.
\end{definition}

We formulate a lemma that says that certain staged subtrees of balanced and stratified
staged trees are also balanced and stratified.
\begin{lemma} \label{lem:subbalanced}
Let $(\T,\theta)$ be a staged tree of level $m$ and let $q$ be a positive integer with $1\leq q \leq m-1$.
If $(\T,\theta)$ is balanced and stratified then the staged subtree $\T^{(q)}$ of $\T$ as defined in  
Definition~\ref{def:sublevel} is
also balanced and stratified.
\end{lemma}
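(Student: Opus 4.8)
The plan is to verify the two defining properties of $\T^{(q)}$ separately, handling \emph{stratified} first since it is essentially immediate, and then \emph{balanced}, which requires a bit more care because condition $(\star)$ is stated in terms of the interpolating polynomials $t(\cdot)$, and these change when we pass from $\T$ to $\T^{(q)}$.

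First I would check that $\T^{(q)}$ is stratified. By construction (Definition~\ref{def:sublevel}) the vertex set of $\T^{(q)}$ is $V_{\leq q}$ and its edge set is $E_{\leq q}$, so every leaf of $\T^{(q)}$ is either a leaf of $\T$ that already had level $\leq q$ (impossible unless $q=m$, excluded since $q\leq m-1$) or a vertex of $\T$ at level exactly $q$; in either case all leaves of $\T^{(q)}$ sit at level $q$. For the stage condition: the stages of $\T^{(q)}$ are exactly the stages of $\T$ restricted to $V_{\leq q}$ (two vertices $v,w\in V_{\leq q}$ are in the same stage in $\T^{(q)}$ iff $\theta_v=\theta_w$ in $\T^{(q)}$, and since the outgoing edges of such $v$ are the same in $\T$ and in $\T^{(q)}$, this holds iff they were in the same stage in $\T$). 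Since $\T$ is stratified, any two vertices in the same stage of $\T$ have equal level, hence the same is true in $\T^{(q)}$, so $\T^{(q)}$ is stratified.

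Next I would check that $\T^{(q)}$ is balanced. Let $v,w$ be two vertices in the same stage of $\T^{(q)}$; by the observation above they are in the same stage of $\T$, and by stratifiedness $\ell(v)=\ell(w)=:p$ for some $p\leq q$. If $p<q$, then the subtree of $\T^{(q)}$ rooted at any child $v_i$ of $v$ coincides with the subtree of $\T$ rooted at $v_i$ (all vertices below $v_i$ have level $\leq p+1 \leq q$), so the interpolating polynomials $t(v_i)$ computed in $\T^{(q)}$ agree with those computed in $\T$; hence condition $(\star)$ for $v,w$ in $\T^{(q)}$ is literally the same identity as in $\T$, which holds because $\T$ is balanced. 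The only genuinely new case is $p=q$, i.e.\ $v,w$ are leaves of $\T^{(q)}$: then $v,w$ have no children in $\T^{(q)}$ and condition $(\star)$ is vacuous (the index range $\{0,\ldots,k\}$ is empty, so there are no pairs $i\neq j$ to check). Therefore every pair of same-stage vertices of $\T^{(q)}$ satisfies $(\star)$, so $\T^{(q)}$ is balanced.

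The main obstacle, and the point that most deserves a careful sentence in the writeup, is the claim that for $v$ at level $p<q$ the interpolating polynomials $t(v_i)$ are unchanged when passing from $\T$ to $\T^{(q)}$ — this uses that $\T^{(q)}$ is obtained by truncating at a \emph{uniform} level, so for any vertex at level $\leq q-1$ its entire subtree is retained in $\T^{(q)}$. (This is where the hypothesis $q\leq m-1$, or rather $p\leq q-1$, enters; note the children $v_i$ of such $v$ are at level $p+1\leq q$, hence belong to $V_{\leq q}$, and so do all their descendants up to level $q$, which are leaves of $\T^{(q)}$.) Once this is granted, both properties follow with no computation. I would write the proof in two short paragraphs mirroring the two cases above, invoking Definition~\ref{def:sublevel}, Definition~\ref{def:star}, and Lemma~\ref{lem:recursive} only implicitly for the identification of $t(\cdot)$.
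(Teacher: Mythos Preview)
There is a genuine gap in your argument for the balanced property. Your claim that ``the subtree of $\T^{(q)}$ rooted at any child $v_i$ of $v$ coincides with the subtree of $\T$ rooted at $v_i$'' is false. If $v$ sits at level $p<q$, then $v_i$ sits at level $p+1$, and the subtree of $\T$ rooted at $v_i$ extends all the way down to level~$m$, whereas the subtree of $\T^{(q)}$ rooted at $v_i$ is truncated at level~$q<m$. (Your parenthetical ``all vertices below $v_i$ have level $\leq p+1\leq q$'' is simply not true: vertices below $v_i$ have levels $p+1,p+2,\ldots,m$.) Consequently the interpolating polynomials differ: $t(v_i)$ computed in $\T$ involves edge labels from levels $p+2,\ldots,m$, while $t(v_i)$ computed in $\T^{(q)}$ involves only labels from levels $p+2,\ldots,q$. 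For a concrete instance take $m=3$, $q=2$, and $v$ the root ($p=0$): then $t(v_i)$ in $\T$ has degree~$2$, but in $\T^{(2)}$ it has degree~$1$. So condition~$(\star)$ in $\T^{(q)}$ is \emph{not} literally the same identity as in~$\T$.

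What is needed is a passage from the identity in $\R[\Theta]_{\T}$ to the corresponding identity in $\R[\Theta]_{\T^{(q)}}$. The paper does this by a specialization argument: using Lemma~\ref{lem:recursive} repeatedly, one writes $t(\bm{c})$ in $\T$ (for $\bm{c}\in\{v_i,w_j\}$) as $\sum_{\bm{u}}\bigl(\prod_{e}\theta(e)\bigr)\,t(\bm{u})$, where $\bm{u}$ runs over the level-$q$ vertices below $\bm{c}$ and the product is over the edges on the $\bm{c}$-to-$\bm{u}$ path. Because $\T$ is stratified, the factors $t(\bm{u})$ involve only labels at levels $>q$, disjoint from the labels appearing in the products. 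Specializing $t(\bm{u})\mapsto 1$ for every such $\bm{u}$ (which is consistent: e.g.\ choose late edge labels satisfying the sum-to-one conditions) turns $t(\bm{c})$ into the interpolating polynomial of $\bm{c}$ in $\T^{(q)}$, and hence carries the $(\star)$ identity in $\T$ to the $(\star)$ identity in $\T^{(q)}$. Your treatment of the stratified property and of the vacuous case $p=q$ is fine; only the core case $p<q$ needs this correction.
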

\begin{proof}
We must prove that if $\bm{a},\bm{b}$ are two vertices in $\T^{(q)}$ that are in the same stage,
then they satisfy condition $(\star)$ in $\R[\Theta]_{\T^{(q)}}$. Since $\T$ is balanced, then $\aaa,\bbb$
satisfy condition $(\star)$ in $\R[\Theta]_{\T}$. Namely,
\begin{align} \label{eq:recursivestar}
t(\aaa k_1)t(\bbb k_2)=t(\aaa k_2)t(\bbb k_1) \text{ in } \R[\Theta]_{\T} \quad\text{ for all } 
k_1,k_2\in\{0,\ldots, |\mathrm{ch}(\aaa)|\}.
\end{align}
For a vertex $v$ in $\T^{(q)}$ define
$[v]=\{\bm{u}\in \bm{i}_{\T^{(q)}}\,|\, \text{the root-to-}\bm{u} \text{ path in } \T^{(q)} \text{ goes
through } v \}$. Then by a repeated use of Lemma~\ref{lem:recursive}, for $\bm{c}\in \{\aaa k_1,\bbb 
k_2,
\aaa k_2,\bbb k_1\}$,
\[t(\bm{c})=\sum_{\bm{u}\in [\bm{c}]} \prod_{e\in E(\lambda_{\bm{u}})}\theta(e)t(\bm{u})\]
where $\lambda_{\bm{u}}$ is the $\bm{c}$ to $\bm{u}$ path in $\T^{(q)}$. Here $t(\bm{c})$ is
an element of $\R[\Theta]_{\T}$. Denote by $t(\bm{c})|_{\T^{(q)}}$ the polynomial obtained
from $t(\bm{c})$ by specializing $t(\bm{u})=1$ for all $\bm{u}\in [\bm{c}]$.
Since $\T$ is stratified, $t(\bm{c})|_{\T^{(q)}}$ is the interpolating polynomial of the subtree
$\T^{(q)}$ rooted at $\bm{c}$. Applying this specialization to (\ref{eq:recursivestar})
yields condition $(\star)$ for the vertices $\aaa,\bbb$ in $\R[\Theta]_{\T^{(q)}}$.

\end{proof}
\begin{proposition} \label{prop:extensions}
Let  $\T_i$ be a balanced and inductively constructed staged tree. Suppose $\T_{i+1}=[\T_i,\T_{G^i}]$ 
 and $\T_{i+1}$ is balanced. Then the elements in  
\begin{align*}
\Lift^{i-2}(\Quad_{B_1}), \Lift^{i-3}(\Quad_{B_2}), \ldots, \Lift(\Quad_{B_{i-2}}),\Quad_{B_{i-1}} 
\end{align*} are $\mathcal{A}_i$-graded. 
\end{proposition}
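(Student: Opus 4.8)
The plan is to induct on $i$, tracking how the $\mathcal{A}_j$-gradedness of a family of lifted quadratic binomials is preserved when we pass from $\T_j$ to $\T_{j+1}=[\T_j,\T_{G^j}]$. The key observation is that a single lifting step $\Lift_{\mathcal{A}_{j+1}}(\cdot)$ is only defined on $\mathcal{A}_{j+1}$-graded binomials, so the whole statement amounts to: each of the quadrics in $\Quad_{B_1},\ldots,\Quad_{B_{i-1}}$, after being lifted the appropriate number of times, remains homogeneous for the \emph{next} grading in line, all the way up to $\mathcal{A}_i$. I would first record the explicit shape of a lifted quadric: starting from $\underline{p^{i'}_{\bm{a}k_1}p^{i'}_{\bm{b}k_2}}-p^{i'}_{\bm{b}k_1}p^{i'}_{\bm{a}k_2}\in\Quad_{B_j}$, each subsequent lift appends one more coordinate from a gluing block $\bm{i}_{\B}$ to the four leaf-indices, preserving the property that the two monomials differ by a transposition of two of the ``$\T$-part'' indices $\bm{a},\bm{b}$ while the appended indices match term-for-term. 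So after $q$ lifts the binomial has the form $p_{\bm{a}\bm{k}}p_{\bm{b}\bm{l}}-p_{\bm{b}\bm{k}}p_{\bm{a}\bm{l}}$ with $\bm{a},\bm{b}$ ranging over leaves of the relevant $\T_{j}$-stage and $\bm{k},\bm{l}$ strings over the glued blocks.

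Next I would verify the base of the induction and the inductive step simultaneously. For the base case $i=2$ (or $i=3$, depending on indexing), the only element in the list is $\Quad_{B_{i-1}}$ itself, and one checks directly that the quadrics $p^{i'}_{\bm{a}k_1}p^{i'}_{\bm{b}k_2}-p^{i'}_{\bm{b}k_1}p^{i'}_{\bm{a}k_2}$ are homogeneous for $\mathcal{A}_i$ because $\deg(p_{\bm{a}k}^{i'})=\bm{e}_{i'}$ depends only on the block index $i'$, and both monomials use exactly one variable from block $i'$ twice — in fact $\Quad_B$ is $\mathcal{A}$-graded essentially by construction of the toric fiber product. For the inductive step, assuming $\Lift^{j-2}(\Quad_{B_1}),\ldots,\Quad_{B_{i-2}}$ are $\mathcal{A}_{i-1}$-graded (the hypothesis applied with $i-1$ in place of $i$, legitimate because $\T_{i-1}$ is balanced by Lemma~\ref{lem:subbalanced}), I must show two things: (a) each of these, after one more application of $\Lift_{\mathcal{A}_i}$, stays $\mathcal{A}_i$-graded, and (b) the new term $\Quad_{B_{i-1}}$ that enters the list is $\mathcal{A}_i$-graded. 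Part (b) is the base-case computation again. Part (a) is where balancedness of $\T_i$ is used, and it is the heart of the argument.

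The hard part will be (a): to even \emph{apply} $\Lift_{\mathcal{A}_i}$ to a quadric $f=p_{\bm{a}_1}^{i_1}p_{\bm{a}_2}^{i_2}-p_{\bm{a}_3}^{i_1}p_{\bm{a}_4}^{i_2}$ we need $f$ to lie in $\ker(\varphi_{\T_{i-1}})$ and to be pure $\mathcal{A}_i$-graded, and then $\Lift_{\mathcal{A}_i}(f)$ is automatically $\mathcal{A}_i$-graded because each $f_k = p_{\bm{a}_1k_1}^{i_1}p_{\bm{a}_2k_2}^{i_2}-p_{\bm{a}_3k_1}^{i_1}p_{\bm{a}_4k_2}^{i_2}$ has both monomials of multidegree $\bm{e}_{i_1}+\bm{e}_{i_2}$. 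So the real content is checking that the partition $G^{i-1}$ defining $\mathcal{A}_i$ is \emph{compatible} with these quadrics — i.e. that the leaves $\bm{a}_1,\bm{a}_3$ fall in a common block $G^{i-1}_{j_1}$ and $\bm{a}_2,\bm{a}_4$ in a common block $G^{i-1}_{j_2}$. This is precisely the condition that lets the degree computation go through, and it is here that one invokes condition $(\star)$: balancedness of $\T_i$ forces the relation $t(\bm{a}_1)t(\bm{a}_2)=t(\bm{a}_3)t(\bm{a}_4)$-type identities that match the binomials $f$ coming from lower stages, and stratification guarantees the variable-disjointness that makes ``being in the same block'' equivalent to ``$t(\cdot)$ agree after the $\T^{(q)}$-specialization of Lemma~\ref{lem:subbalanced}''. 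I would isolate this compatibility as a short sublemma, prove it by combining Lemma~\ref{lem:recursive}, Lemma~\ref{lem:subbalanced}, and Definition~\ref{def:star}, and then the gradedness conclusion in Proposition~\ref{prop:extensions} follows mechanically by the degree bookkeeping described above. The only other point requiring care is keeping the indexing of the iterated $\Lift$ operators straight so that ``$\Lift^{i-2}(\Quad_{B_1})$'' really has had exactly the right gradings applied in exactly the right order — a notational rather than mathematical obstacle.
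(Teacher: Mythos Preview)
Your overall plan---induct on the level, invoke balancedness and Lemma~\ref{lem:recursive} at each step---matches the paper's, but two concrete errors keep the argument from going through.

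First, your base case and part (b) are wrong. You claim that $\Quad_{B_{i-1}}$ is $\mathcal{A}_i$-graded ``by construction of the toric fiber product'' because both monomials have degree $2\bm{e}_{i'}$. But $i'$ is a block index in $G^{i-1}$, so what you have written is $\mathcal{A}_{i-1}$-gradedness, which is indeed trivial. The claim to be proved is $\mathcal{A}_i$-gradedness, where $\mathcal{A}_i$ comes from the \emph{next} partition $G^i$ of the leaves of $\T_i$; this is not automatic and is precisely where balancedness of $\T_{i+1}$ enters. In the paper this is the $m=0$ step: one uses condition~$(\star)$ for $\bm{a},\bm{b}$ in $\R[\Theta]_{\T_{j+2}}$ to get $t(\bm{a}k_1)t(\bm{b}k_2)=t(\bm{b}k_1)t(\bm{a}k_2)$ as a product of one-level interpolating polynomials, and then reads off that the multiset of $G^{j+1}$-blocks on each side agrees. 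The same confusion recurs in part (a): after applying $\Lift_{\mathcal{A}_{i-1}}$ the lifted binomial $f_k$ has $\mathcal{A}_{i-1}$-degree $\bm{e}_{i_1}+\bm{e}_{i_2}$ automatically, but that says nothing about its $\mathcal{A}_i$-degree.

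Second, your description of the shape of an iterated lift is too restrictive. You assert it is $p_{\bm{a}\bm{k}}p_{\bm{b}\bm{l}}-p_{\bm{b}\bm{k}}p_{\bm{a}\bm{l}}$, with the appended strings ``matching term-for-term''. This only covers the case where, at each lifting step, $\bm{a}k_1$ and $\bm{b}k_1$ land in the same $G$-block. But $\mathcal{A}$-homogeneity only forces equality of \emph{multisets} of blocks, so the pairing can instead be $\bm{a}k_1$ with $\bm{a}k_2$, in which case the lift is $p_{\bm{a}k_1\beta_1}p_{\bm{b}k_2\beta_2}-p_{\bm{b}k_1\beta_2}p_{\bm{a}k_2\beta_1}$ with crossed suffixes. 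The paper's general form $p_{\bm{a}k_1s}p_{\bm{b}k_2u'}-p_{\bm{b}k_1u}p_{\bm{a}k_2s'}$ allows all four appended strings to differ, and the inductive step must establish $t(\bm{a}k_1s)t(\bm{b}k_2u')=t(\bm{b}k_1u)t(\bm{a}k_2s')$ in $\R[\Theta]_{\T_{j+m+1}}$. Deriving this identity is the real work: one expands condition~$(\star)$ for $\bm{a},\bm{b}$ via Lemma~\ref{lem:recursive} down to level $j+m$, uses Lemma~\ref{lem:subbalanced} to match monomials on both sides, and then uses stratification to separate variables. Your proposal names these ingredients but treats the deduction as a ``short sublemma''; it is in fact the entire substance of the proof.
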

\begin{proof}
Note that any inductively constructed staged tree is stratified, therefore $\T_i$ and $\T_{i+1}$
are stratified.
By assumption $\T_i$ is inductively constructed, therefore there is a sequence of trees and
gluing components $(\T_1,\T_{G^1}),\ldots,(\T_{i-1},\T_{G^{i-1}})$ from which $\T_i$ is constructed.
Fix $q\in \{0,1,\ldots, i-2\}$ and $j=i-q-1$, we  show that the binomials in $\Lift^q(\Quad_{B_j})$
are  \mbox{$\mathcal{A}_{i}$-graded}. To this end we
 prove that for $m$ such that $0\leq m\leq q$,
the equations in $\Lift^{m}(\Quad_{B_j})$ are $\mathcal{A}_{j+m+1}$-graded. The 
proof is by induction on $m$. 

Fix $m=0$,
 we show that the elements in $\Quad_{B_j}$ are $\mathcal{A}_{j+1}$-graded.
 The equations   
\[\Quad_{B_j}=\bigcup_{\alpha=1}^{r_j}\{p_{\bm{a}k_1}p_{\bm{b}k_2}-p_{\bm{b}k_1}p_{\bm{a}k_2}\mid \bm{a},\bm{b}\in 
G_{\alpha}^{j}, k_1,k_2 \in \bm{i}_{\mathcal{B}_{\alpha}^j}\}\]
 are the generators of $I_{B_j}$.
The multidegrees $\mathcal{A}_{j+1}$ are defined according to the partition $G^{j+1}$ of the leaves
of $\T_{j+1}$. If two leaves in $\T_{j+1}$ are in the same set of the partition $G^{j+1}$ then
they have the same degree. Since $\T_i$ is balanced and $\T_{j+2}=\T_{i}^{(j+2)}$ then by Lemma~\ref{lem:subbalanced} the staged tree $\T_{j+2}$ is balanced. Therefore 
all the vertices in the stages $G^j=\{G_1^j,\ldots,G_{r_j}^j\}$ satisfy condition $(\star)$ 
in $\R[\Theta]_{\T_{j+2}}$. This means that
for all
$\alpha \in \{1,\ldots, r_{j}\}$ and $\bm{a},\bm{b}\in G_{\alpha}^j$
\begin{align}\label{eq:star1}
t({\bm{a}k_1})t({\bm{b}k_2})=t({\bm{b}k_1})t({\bm{a}k_2})\text{ for }  k_1,k_2 \in \bm{i}_{\mathcal{B}_{\alpha}^j}
%
\end{align}
where $\mathrm{ch}(\bm{a})=\{\bm{a}k\,|\, k\in \bm{i}_{\B_{\alpha}^j}\}$ and 
$\mathrm{ch}(\bm{b})=\{\bm{b}k\,|\, k\in \bm{i}_{\B_{\alpha}^j}\}$.

Using the construction of $\T_{j+2}$ from $\T_{j+1}$ and $\T_{G^{j+1}}$, we see that for any index $\bm{c}\in\{\bm{a}k,\bm{b}k\mid k\in \bm{i}_{\mathcal{B}_{\alpha}^j}\}$ of the leaves of $\T_{j+1}$, $t(\bm{c})$ is equal to the interpolating polynomial $t_{\B^{j+1}}$ of some one-level probability tree
$\mathcal{B}^{j+1}$  in
$\T_{G^{j+1}}$. Also, the assignment $t(\bm{c})=t_{\B_{\*}^{j+1}}$ determines the degree of $p_{\bm{c}}$
in $\R[p]_{\T_{j+1}}$. To be precise, if $t(\bm{c})=t(\bm{c}')$ for $\bm{c},\bm{c}'\in\{\bm{a}k,\bm{b}k\mid k\in \bm{i}_{\mathcal{B}_{\alpha}^j}\}$ then $\bm{c}$ and $\bm{c}'$ are in the same set of the partition
$G^{j+1}$ hence $\deg(p_{\bm{c}})=\deg(p_{\bm{c}'})$.
The fact that Equation (\ref{eq:star1}) holds for $\T_{j+2}$ means that in $\R[\Theta]_{\T_{j+2}}$
we have
\begin{align}
t(\bm{a}k_1)t(\bm{b}k_2) &= t_{\mathcal{B}^{j+1}_{1}}t_{\mathcal{B}^{j+1}_{2}}\label{eq:star2} \\ 
&= t(\bm{b}k_1)t(\bm{a}k_2) \nonumber
\end{align}
for some $\mathcal{B}^{j+1}_{1},\mathcal{B}^{j+1}_{2}\in \T_{G^{j+1}}$. Therefore $\deg(p_{\bm{a}k_1}p_{\bm{b}k_2})=
\deg(p_{\bm{b}k_1}p_{\bm{a}k_2})$ with respect to $\mathcal{A}_{j+1}$. This completes the
proof for $m=0$.

As a result, all the equations in $\Quad_{B_j}$ can be lifted to elements in $\ker(\varphi_{\T_{j+2}})$. Using the notation in Equation~(\ref{eq:star2}), the lift of an element $f = p_{\bm{a}k_1}p_{\bm{b}k_2}-p_{\bm{b}k_1}p_{\bm{a}k_2} \in \Quad_{B_j}$
depends on the assignment $\{\bm{a}k_1,\bm{a}k_2, \bm{b}k_1,\bm{b}k_2\}\to \{\mathcal{B}_1^{j+1},\mathcal{B}_2^{j+1}\}$.
For instance, if $t(\bm{a}k_1)=t(\bm{b}k_1)=t_{\mathcal{B}_{1}^{j+1}}$ and $t(\bm{b}k_2)=t(\bm{a}k_2)=
t_{\mathcal{B}_{2}^{j+1}}$ then $ T_{f}= \bm{i}_{\mathcal{B}_{1}^{j+1}}\times\bm{i}_{\mathcal{B}_{2}^{j+1}}$ so
\[\Lift(f)=\{f_{\beta}\mid \beta \in T_f\}=\{ p_{\bm{a}k_1\beta_1}^1p_{\bm{b}k_2\beta_2}^2-p_{\bm{b}k_1\beta_1}
^1p_{\bm{a}k_2\beta_2}^2 \mid \beta_1 \in \bm{i}_{\mathcal{B}_{1}^{j+1}}, \beta_2 \in \bm{i}_{\mathcal{B}_{2}^{j+1}} \}.\]


Suppose we have constructed $\Lift^{m-1}(\Quad_{B_j})$ inductively by lifting the equations in $\Quad_{B_j}$ and at each step all equations lift.
An element in $\Lift^{m-1}(\Quad_{B_j})$ is a binomial of the form
\begin{align}
f=p_{\bm{a}k_1s}p_{\bm{b}k_2u'}-p_{\bm{b}k_1u}p_{\bm{a}k_2s'} \label{eq:extended}
\end{align} where $\alpha \in \{1,\ldots,r_j\}, \bm{a},\bm{b}
\in G_{\alpha}^{j}$, $k_1,k_2\in \bm{i}_{\mathcal{B}^j_{\alpha}}$ and $s,s',u,u'$ are sequences of nonnegative integers
of length $m-1$ that arise as subindices after lifting $m-1$ times. Note that
$\aaa k_1s,\bbb k_2u',\bbb k_1 u, \aaa k_2s' \in \bm{i}_{\T_{j+m}}$The claim is that (\ref{eq:extended}) is $\mathcal{A}_{j+m}$-graded. 

Following a similar argument as for $m=0$, we know that two elements in
the same set of the 
partition $G^{j+m}$
have the same multidegree with respect to $\mathcal{A}_{j+m}$. As before, this condition can be verified
for $f$ by checking that
\begin{align} 
t(\bm{a}k_1s)t(\bm{b}k_2 u')=t(\bm{b}k_1u)t(\bm{a}k_2s') \mbox{ in } \R[\Theta]_{\T_{j+m+1}}.\label{eq:star3}
\end{align}
\mbox{For $\bm{c}\in \{\aaa k_1,\bbb k_2,
\aaa k_2,\bbb k_1\}$, }
$\left[\bm{c}\right]:=\{\beta\in \bm{i}
_{\T_{j+m}} \,|\,
\mbox{the root-to-$\beta$ path  in } \T_{j+m}  \mbox{ goes} \mbox{ trough } \bm{c}\}$.
To check that Equation~\ref{eq:star3} holds, consider (\ref{eq:star}) from  Definition~\ref{def:star} for the vertices $\aaa,\bbb \in G^j_{\alpha}$.  This equation is 
$t(\aaa k_1)t(\bbb k_2)=t(\bbb k_1)t(\aaa k_2)$ where $k_1,k_2\in \bm{i}_{\B_{\alpha}^j}$.
We use Lemma~\ref{lem:recursive} to rewrite this equation as
\begin{align}
 &\phantom{=}\left(\sum_{\bm{a}k_1s\in[\bm{a}k_1]}\left( \prod_{e\in E(\bm{a}k_1\to \bm{a}k_1s)}\!\!\!\!\!\!\!\!\theta(e)\right)t(\bm{a}k_1s)\right)
\cdot  \left(\sum_{\bm{b}k_2u'\in[\bm{b}k_2]}\left( \prod_{e\in E(\bm{b}k_2\to \bm{b}k_2u')}\!\!\!\!\!\!\!\!\theta(e)\right)t(\bm{b}k_2u')\right) \label{eq:star4}\\
&=\left(\sum_{\bm{b}k_1u\in[\bm{b}k_1]}\left( \prod_{e\in E(\bm{b}k_1\to \bm{b}k_1u)}\!\!\!\!\!\!\!\!\theta(e)\right)t(\bm{b}k_1u)\right)
\cdot  \left(\sum_{\bm{a}k_2s'\in[\bm{a}k_2]}\left( \prod_{e\in E(\bm{a}k_2\to \bm{a}k_2s')}\!\!\!\!\!\!\!\!\theta(e)\right)t(\bm{a}k_2s')\right).  \nonumber                        
\end{align}
When we specialize $t(\bm{a}k_1s)=t(\bm{b}k_2u')=t(\bm{b}k_1u)=t(\bm{a}k_2s')=1$ in each sum  in Equation~(\ref{eq:star4}) we
recover the interpolating polynomials for $t(\bm{a}k_1), t(\bm{b}k_2),t(\bm{b}k_1),t(\bm{a}k_2)$ in $\R[\Theta]_{\T_{j+m}}$.
By Lemma~\ref{lem:subbalanced}, $\T_{j+m}$ satisfies condition $(\star)$ therefore
\begin{align}
 &\phantom{=}\left(\sum_{\bm{a}k_1s\in[\bm{a}k_1]} \prod_{e\in E(\bm{a}k_1\to \bm{a}k_1s)}\!\!\!\!\!\!\!\!\theta(e)\right)
\cdot  \left(\sum_{\bm{b}k_2u'\in[\bm{b}k_2]} \prod_{e\in E(\bm{b}k_2\to \bm{b}k_2u')}\!\!\!\!\!\!\!\!\theta(e)\right) = \\
&\phantom{=} \left(\sum_{\bm{b}k_1u\in[\bm{b}k_1]} \prod_{e\in E(\bm{b}k_1\to \bm{b}k_1u)}\!\!\!\!\!\!\!\!\theta(e)\right)
\cdot  \left(\sum_{\bm{a}k_2s'\in[\bm{a}k_2]} \prod_{e\in E(\bm{a}k_2\to \bm{a}k_2s')}\!\!\!\!\!\!\!\!\theta(e)\right).                          
\end{align}
The factors in the above equality are sums of monomials all with coefficients equal to one. Thus for every pair
$\bm{a}k_1s\in[\bm{a}k_1],\bm{b}k_2u'\in[\bm{b}k_2]$ in the product of the left hand side of the equation, there exists
a pair $\bm{a}k_2s'\in[\bm{a}k_2],\bm{b}k_2u\in[\bm{b}k_1]$ in the product of the right hand side of the equation such that
\begin{align}
 &\phantom{=}\left( \prod_{e\in E(\bm{a}k_1\to \bm{a}k_1s)}\!\!\!\!\!\!\!\!\theta(e)\right)
\cdot  \left( \prod_{e\in E(\bm{b}k_2\to \bm{b}k_2u')}\!\!\!\!\!\!\!\!\theta(e)\right) =
\left(\prod_{e\in E(\bm{b}k_1\to \bm{b}k_1u)}\!\!\!\!\!\!\!\!\theta(e)\right)
\cdot  \left( \prod_{e\in E(\bm{a}k_2\to \bm{a}k_2s')}\!\!\!\!\!\!\!\!\theta(e)\right).                          
\end{align}
Hence condition $(\star)$ for the vertices $\aaa,\bbb$ in $\T_{j+m+1}$ can be rewritten as
\[\sum_{\bm{a}k_1s\in[\bm{a}k_1],\bm{b}k_2u'\in[\bm{b}k_2]}\left( \prod_{e\in E(\bm{a}k_1\to \bm{a}k_1s)}\!\!\!\!\!\!\!\!\theta(e)\right)
  \left( \prod_{e\in E(\bm{b}k_2\to \bm{b}k_2u')}\!\!\!\!\!\!\!\!\theta(e)\right)(t(\bm{a}k_1s)t(\bm{b}k_2 u')-t(\bm{b}k_1u)t(\bm{a}k_2s'))=0.\]
Since $\T_{j+m+1}$ is stratified, the variables involved in the factored monomials above are disjoint
from the variables involved in the factors of the form $t(\bm{a}k_1s)t(\bm{b}k_2 u')-t(\bm{b}k_1u)t(\bm{a}k_2s')$, therefore
the equation above holds if and only if $t(\bm{a}k_1s)t(\bm{b}k_2 u')-t(\bm{b}k_1u)t(\bm{a}k_2s')=0$ for each summand. This
proves that the elements in $\Lift^{m-1}(\Quad_{B_j})$ are
$\mathcal{A}_{j+m}$-graded.
\end{proof}

\begin{proof}[Proof of Theorem~\ref{thm:main}]
If $\T$ is stratified, then $\T$ is an iteratively constructed staged tree and
$\T=\T_n$ for some $n$.
Set $F_{n}=\Lift^{n-2}_{\mathcal{A}_n}(\Quad_{B_1})\cup \Lift^{n-3}_{\mathcal{A}_n}(\Quad_{B_1})
\cup \cdots \cup \Quad_{B_{n-1}}$.
We prove by induction on $n$ that $\ker(\varphi_{\T_n})$ is generated by $F_n$  and that
$F_n$ is a Gr\"obner basis  with squarefree intial ideal. The first
non-trivial case is $n=2$. We have $F_2=\Quad_{B_1}$ and from Proposition~10 in \cite{Sullivant2007}, 
$F_2$ is a Gr\"obner basis for the ideal $\ker(\varphi_{\T_2})=\ker(\varphi_{\T_1})\times_{
\mathcal{A}_1}\langle 0 \rangle$. 
Suppose the statement is true for $i$,
so the elements in $F_{i}$ are a Gr\"obner basis for $\ker(\varphi_{\T_i})$. Since $\T_n$ is balanced,
by Lemma~\ref{lem:subbalanced} the trees
$\T_i$ and $\T_{i+1}$ are also balanced.
Then from Proposition~\ref{prop:extensions} the elements in $F_i$ are $\mathcal{\A}_{i}$ homogeneous,
 so by \cite[Proposition 10]{Sullivant2007} the set $F_{i+1}$ is
a Gr\"obner basis for $\ker(\varphi_{\T_{i+1}})$. Since the elements in $F_n$ are all extensions
of elements in $\Quad_{B_j}$ for $j$ with $1\leq j \leq n-1$ we see that the leading terms of
these binomials are squarefree. Hence the initial ideal of $\langle F_n\rangle$ is squarefree.
\end{proof}

\begin{corollary}
Let $(\T,\theta)$ be a balanced and stratified staged tree. Fix $\Delta$ to be the polytope
defined by the convex hull of the lattice points in the exponent matrix of the map $\varphi_{\T}$.
Then $\Delta$ has a regular unimodular triangulation. In particular the toric variety defined
by $\ker(\varphi_{\T})$ is Cohen-Macaulay.
\end{corollary}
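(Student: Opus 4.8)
The plan is to deduce the corollary from Theorem~\ref{thm:main} together with standard results relating squarefree initial ideals, regular triangulations, and Cohen--Macaulayness. First I would recall that the toric ideal $\ker(\varphi_{\T})$ is, by Definition~\ref{def:toricideal}, the toric ideal $I_A$ of the integer matrix $A$ whose columns are the exponent vectors of the monomials $z\cdot\prod_{e\in E(\lambda)}\theta(e)$, and that these columns are precisely the lattice points whose convex hull is $\Delta$. The map $\varphi_{\T}$ is squarefree because $(\T,\theta)$ is stratified, so each column of $A$ is a $0/1$ vector together with the extra $z$-coordinate equal to $1$; in particular all the columns lie on an affine hyperplane (the $z$-coordinate is constant), so $\Delta$ is a lattice polytope and $A$ is a graded (homogeneous) configuration.

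Next I would invoke the dictionary between Gr\"obner bases of toric ideals and regular subdivisions: for a homogeneous toric ideal $I_A$ and a generic weight vector $\omega$, the initial ideal $\mathrm{in}_\omega(I_A)$ is squarefree if and only if the regular triangulation $\Delta_\omega$ of $\Delta$ induced by $\omega$ is unimodular (see, e.g., Sturmfels, \emph{Gr\"obner Bases and Convex Polytopes}, Chapter~8, specifically Corollary~8.9). Theorem~\ref{thm:main} produces a term order under which $\ker(\varphi_{\T})$ has a Gr\"obner basis with squarefree initial ideal; refining this term order to a generic weight vector $\omega$ with the same initial ideal, we obtain that the regular triangulation $\Delta_\omega$ is unimodular. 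Hence $\Delta$ admits a regular unimodular triangulation, which is the first assertion.

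For the second assertion I would apply the theorem of Hochster: the semigroup ring $\mathbb{K}[A]$ associated to a normal affine semigroup is Cohen--Macaulay, and a polytope possessing a unimodular (indeed even just unimodular cover, but a triangulation certainly suffices) covering has the property that the associated semigroup is normal; equivalently, by Sturmfels Chapter~13, the existence of a squarefree initial ideal of $I_A$ forces $\mathbb{K}[x]/I_A$ to be Cohen--Macaulay. Concretely, a squarefree initial ideal is the Stanley--Reisner ideal of a simplicial complex which is shellable (being the complex of a regular triangulation of a polytope), hence Cohen--Macaulay, and Cohen--Macaulayness passes from $\mathrm{in}_\omega(I_A)$ up to $I_A$ by the standard flat-degeneration semicontinuity of depth. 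Therefore the toric variety $\mathrm{Spec}\,\mathbb{K}[x]/\ker(\varphi_{\T})$ (equivalently its projective coordinate ring) is Cohen--Macaulay.

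I expect the only real subtlety to be bookkeeping: one must make sure the weight vector refining the term order of Theorem~\ref{thm:main} is generic enough that the associated regular subdivision is an honest triangulation and that its restriction to each simplex is unimodular -- but this is exactly the content of the squarefree-initial-ideal $\Leftrightarrow$ unimodular-regular-triangulation equivalence, so no new argument is needed. The passage from ``squarefree initial ideal'' to ``Cohen--Macaulay'' is likewise a black-box citation. Thus the proof is essentially a two-line invocation of Theorem~\ref{thm:main} followed by citations to Sturmfels' book (Corollary~8.9 and Corollary~13.16, say), and the main thing to get right is simply stating the correspondence in the homogeneous setting that the stratified hypothesis guarantees.
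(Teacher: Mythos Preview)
Your proposal is correct and follows essentially the same route as the paper: invoke Theorem~\ref{thm:main} to obtain a squarefree initial ideal, then cite \cite[Corollary~8.9]{Sturmfels96} for the regular unimodular triangulation. The paper's proof is in fact the two-line citation you anticipated and leaves the Cohen--Macaulay claim implicit, so your added remarks on homogeneity via the $z$-coordinate and on the passage to Cohen--Macaulayness (via normality/Hochster or via shellability of the initial complex) are more than what the paper spells out.
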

\begin{proof}
 The ideal $\ker(\varphi_{\T})$ has
a square free quadratic Gr\"obner basis with respect to a term order $\prec$. From 
\cite[Corollary 8.9]{Sturmfels96}, this induces a regular unimodular triangulation of $\Delta$.
\end{proof}
\section{Connections to discrete statistical models} \label{sec:stats}
Staged tree models are a class of graphical discrete statistical models introduced by Anderson and Smith in \cite{SmithAnderson}.  While
Bayesian networks and decomposable models are defined via conditional independence statements on random variables corresponding
to the vertices of a graph, staged tree models
encode independence relations on the events of an outcome space represented by a tree. In the statistical literature these models are also referred to as chain event graphs. We refer the reader to the book \cite{CEGbook} and to \cite{causalCEG} to find out more about their statistical properties, practical implementation and causal interpretation. In this section we give a formal definition of these models and recall 
 results from \cite{DG2019}
and \cite{CGorgenPhD}
about their defining equations.

Given a discrete random variable $X$ with state space $\{0,\ldots,n\}$, a probability distribution
on $X$ is a vector $(p_0,\ldots,p_n)\in \R^{n+1}$ where $p_i=P(X=i)$, $i\in \{0,\ldots,n\}$, $p_i\geq 0$ and $\sum_{i=0}^n p_i=1$. The open probability simplex \[\Delta_n^{\circ}=\{(p_0,\ldots,p_n)\in \R^{n+1}\,|\, p_i> 0 ,p_0+\ldots+p_n=1\}\]
consists of all the possible positive probability distributions for a discrete random variable with state space
$\{0,\ldots,n\}$. A \emph{discrete statistical model} is a subset of $\Delta_n^{\circ}$. In the next definition we associate a discrete statistical model to 
a given staged tree.

\begin{definition}\label{def:stagedtreemodel}
Let $(\T,\theta)$ be a staged tree and let $\bm{\theta}=(\theta(e) \mid\theta(e)\in \im (\theta)\,)$ be a vector of parameters where each entry is a label in $\la$. We define the parameter space
$\Theta_{\T}:= \{\,\bm{\theta} \,\mid\, \theta(e)\in (0,1)\,\mbox{ and  for all } \bm{a}\in V,\, \sum_{e\in E(\bm{a})}\theta(e)=1 \,
\}.$ 
Note that $\Theta_{\T}$ is a product of simplices.
A \emph{staged tree model} $\mathcal{M}_{(\T,\theta)}$ is the image of the map $\Psi_{\T}: \Theta_{\T}\to \Delta_{|\bm{i}_{\T}|-1}^{\circ}$defined by 
\[\bm{\theta}\mapsto p_{\bm{\theta}}=\left(\prod_{e\in E(\lambda_{\bm{j}})}\theta(e)\right)_{\bm{j}\in \bm{i}_{\T}}.\]
We can check that for every $\bm{\theta}\in
\Theta_{\T}$, $p_{\bm{\theta}}$ is a probability distribution  and therefore
$\Psi(\Theta_{\T})\subset \Delta_{|\bm{i}_{\T}|-1}^{\circ}$. Two staged trees $(\T,\theta)$ and 
$(\T',\theta')$ are
said to be \emph{statistically equivalent} if there exists a bijection between the sets $\Lambda_{\T}$
and $\Lambda_{\T'}$ in such a way that the image of $\Psi_{\T}$ is equal to the image of $\Psi_{\T'}$
under this bijection.
\end{definition}

\begin{example}\label{ex:dec1}
The staged tree $\T_1$ in Figure~\ref{fig:trees} is the staged tree representation of
the decomposable model associated to the undirected graph $G= [12][23][34]$ on four nodes.
\end{example}

\begin{remark}
For staged tree models, the  root-to-leaf paths in the tree represent the possible unfoldings of a sequence
of events. Given an edge $(v,w)$ in $\T$, the label $\theta(v,w)$ is the transition probability from 
$v$ to $w$ given arrival at $v$.
\end{remark}

\begin{remark}
A staged tree model $\M_{(\T,\theta)}$ is a discrete statistical model parameterized by polynomials. The domain of this model is a 
semialgebraic set given by a product of simplices. As a consequence the image of $\Psi_{\T}$ is also a semialgebraic
set. An important property of these models as noted in \cite{CGorgenPhD} is that the only inequality constraints of the
image of $\Psi_{\T}$ are the ones imposed by the probability simplex, namely $0\leq p_{\bm{j}}\leq 1$ for $\bm{j}\in 
\bm{i}_{\T}$ and $\sum_{\bm{j}\in \bm{i}_{\T}}p_{\bm{j}}=1$. 
\end{remark}

In Definition~\ref{def:toricideal} we defined the toric ideal associated to a staged tree
$(\T,\theta)$. Now we define the ideal associated to a staged tree model $\M_{(\T,\theta)}$.
For this we use the rings $\R[p]_{\T}$ and $\R[\Theta]_{\T}$ from Definition~\ref{def:toricideal}.
Consider the ideal $\frak{q}$ of $\R[\Theta]_{\T}$ generated by all sum-to-one conditions
$1-\sum_{e\in E(\bm{a})}\theta(e)$ for $\bm{a}\in V$ and let $\R[\Theta]_{\M_{\T}}:=\R[\Theta]_{\T}/
\frak{q}$. Denote by $\pi$ the canonical projection from $\R[\Theta]_{\T}$ to the
quotient ring $\R[\Theta]_{\M_{\T}}$.

\begin{definition}
Let $\M_{(\T,\theta)} $ be a staged tree model and set $\overline{\varphi}_{\T}:=\pi\circ \varphi_{\T}$. The ideal
$\ker(\overline{\varphi}_{\T})$ is the \emph{staged tree model ideal} associated to the  model $\mathcal{M}_{(\T,\theta)}$.
\end{definition}

From the definition it follows that for every staged tree $(\T,\theta)$, the toric staged tree ideal
is contained in the staged tree model ideal, i.e. $\ker(\varphi_{\T})\subset
\ker(\overline{\varphi}_{\T})$ . It is not true in general that these two ideals are equal \cite{DG2019}. However, Theorem 10 in \cite{DG2019} states
that if a staged tree $(\T,\theta)$ is balanced, then $\ker(\varphi_{\T})=
\ker(\overline{\varphi}_{\T})$. 
\begin{corollary}\label{cor:main}
If $(\T,\theta)$ is a balanced and stratified staged tree, then the ideal $\ker(\overline{\varphi}_{\T})$
has a quadratic Gr\"obner basis with
squarefree initial ideal.
\end{corollary}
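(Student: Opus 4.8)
The plan is to deduce Corollary~\ref{cor:main} directly from Theorem~\ref{thm:main} together with the comparison of the two ideals provided by \cite[Theorem 10]{DG2019}. The key observation is that balancedness of $(\T,\theta)$ already guarantees $\ker(\varphi_{\T})=\ker(\overline{\varphi}_{\T})$, so the statement about the model ideal is literally the same statement as the one about the toric ideal.

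First I would invoke Theorem~\ref{thm:main}: since $(\T,\theta)$ is balanced and stratified, $\ker(\varphi_{\T})$ is generated by a quadratic Gr\"obner basis $\mathcal{G}$ (with respect to some term order $\prec$) whose initial ideal $\ini_{\prec}(\ker(\varphi_{\T}))$ is squarefree. Next I would recall that $(\T,\theta)$ being balanced is precisely the hypothesis of \cite[Theorem 10]{DG2019}, which yields the equality of ideals $\ker(\varphi_{\T})=\ker(\overline{\varphi}_{\T})$ inside $\R[p]_{\T}$. Combining these two facts, the very same set $\mathcal{G}$ is a quadratic Gr\"obner basis for $\ker(\overline{\varphi}_{\T})$ with respect to $\prec$, and its initial ideal is squarefree. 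This completes the argument.

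There is essentially no obstacle here: the corollary is a formal consequence of two already-established results, and the only thing to check is that the hypotheses of Corollary~\ref{cor:main} (balanced and stratified) imply the hypotheses of both Theorem~\ref{thm:main} (balanced and stratified) and \cite[Theorem 10]{DG2019} (balanced), which is immediate. If one wanted to be slightly more careful, one could remark that the equality of ideals is what makes a Gr\"obner basis of one ideal automatically a Gr\"obner basis of the other with the same initial ideal, since Gr\"obner bases and initial ideals depend only on the ideal and the term order, not on any chosen generating set or parametrization.

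\begin{proof}
By Theorem~\ref{thm:main}, the ideal $\ker(\varphi_{\T})$ has a quadratic Gr\"obner basis $\mathcal{G}$ with respect to some term order $\prec$, and $\ini_{\prec}(\ker(\varphi_{\T}))$ is squarefree. Since $(\T,\theta)$ is balanced, Theorem~10 in \cite{DG2019} gives $\ker(\varphi_{\T})=\ker(\overline{\varphi}_{\T})$. As Gr\"obner bases and initial ideals depend only on the ideal and the chosen term order, $\mathcal{G}$ is also a quadratic Gr\"obner basis of $\ker(\overline{\varphi}_{\T})$ with respect to $\prec$, and $\ini_{\prec}(\ker(\overline{\varphi}_{\T}))=\ini_{\prec}(\ker(\varphi_{\T}))$ is squarefree.
\end{proof}
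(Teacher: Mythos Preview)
Your proof is correct and matches the paper's approach: the paper notes immediately before the corollary that balancedness implies $\ker(\varphi_{\T})=\ker(\overline{\varphi}_{\T})$ by \cite[Theorem~10]{DG2019}, so Corollary~\ref{cor:main} follows at once from Theorem~\ref{thm:main}. The paper does not even spell out a proof, treating it as an immediate consequence of these two facts, exactly as you do.
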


\begin{example} \label{ex:dec2}
Consider the staged tree model defined by the tree in Figure~\ref{fig:trees} as in Example~\ref{ex:dec1}. 
Since this staged tree model is equal to the decomposable model given by $G=[12][23][34]$, from \cite{GMS}
we know it has a quadratic Gr\"obner basis. We recover the same result from the perspective of staged trees by using Corollary~\ref{cor:main}.
\end{example}
Corollary~\ref{cor:main} is relevant in statistics because of the connection of
Gr\"obner bases to sampling \cite{Aoki2012}. We presented Example~\ref{ex:dec2} where a  balanced and stratified staged tree represents an instance of a decomposable graphical model. We now provide more examples of staged tree models for
which Corollary~\ref{cor:main} holds. The first one is an explanation of the contraction 
axiom for conditional independence statements through the lens of staged trees.
Before we present our examples we do a quick overview of discrete conditional
independence models. 	Our exposition follows that in \cite[Chapter 4]{Sullivant2019}, for more details we refer the reader to 
\cite[Chapters 1,2,3]{Handbook} and 
\cite{LecturesAlgStat2008}.

Let $X=(X_1,\ldots,X_n)$ be a vector of discrete random variables, where $X_i$ has state spaces $[d_i]$ for $i\in [n]$. 
The vector $X$ has state space $\mathcal{X}=[d_1]\times\cdots\times[d_n]$ and we write $p_{u_1\cdots u_n}$
for the probability $P(X_1=u_1,\ldots,X_n=u_n)$. For each subset $A\subset [n]$, $X_A$  is the subvector
of $X$ indexed by the elements in $A$. Similarly, $ \mathcal{X}_{A}=\prod_{i\in A}[d_i]$ and for a vector $x\in \mathcal{X}$,
$x_A$ denotes the restriction of $x$ to the indexes in $A$.

\begin{definition}
Let
$A,B,C$ be pairwise disjoint subsets of $[n]$. The random vector $X_A$ is conditionally indpendent of $X_B$
given $X_C$ if for every $a\in X_A, b\in X_B$ and $c\in X_C$
\[P(X_A=a,X_B=b|X_C=c)=P(X_A=a|X_C=c)\cdot P(X_B=b|X_C=c)\]
The notation ${X_A \independent X_B \,|\, X_C}$ is used to denote that the random vector $X$ satisfies the
\emph{conditional independence statement} that $X_A$ is conditionally independent on $X_B$ given $X_C$. When $C$ is
the empty set this reduces to marginal independence between $X_A$ and $X_B$.
\end{definition}

If $\mathcal{C}$ is a list of conditional independence statements among variables in
a vector $X$, the \emph{conditional
independence model} $\mathcal{M}_{\mathcal{C}}$ is the set of all probability distributions
on $\mathcal{X}$ that satisfy the conditional independence statements in $\mathcal{C}$.
A conditional independence statement ${X_A \independent X_B \,|\, X_C}$ translates into
the condition that the joint probability distribution of the variables in $X$ 
satisfies a set of quadratic equations.
For elements $a\in X_A, b\in X_B$ and $c\in X_C$ we set $p_{a,b,c,+}=P(X_A=a,X_B=b,X_C=c)$.
\begin{proposition}[\cite{Sullivant2019}] \label{pro:conditional}
If $X$ is a discrete random vector then the independence statement ${X_A \independent X_B \,|\, X_C}$ holds for $X$ if and only if
\[p_{a_1,b_1,c,+}p_{a_2,b_2,c,+}-p_{a_1,b_2,c,+}p_{a_2,b_1,c,+}=0\]
for all $a_1,a_2\in \mathcal{X}_A$, $b_1,b_2\in \mathcal{X}_B$ and $c\in \mathcal{X}_C$.
\end{proposition}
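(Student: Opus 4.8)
The plan is to deduce both implications directly from the definition of conditional independence, using only the fact that $p_{a,b,c,+}=P(X_A=a,X_B=b,X_C=c)$ is the marginal of the joint distribution obtained by summing over the coordinates not indexed by $A\cup B\cup C$, and that further summation produces the marginals $p_{+,+,c,+}=P(X_C=c)$, $p_{a,+,c,+}=P(X_A=a,X_C=c)$ and $p_{+,b,c,+}=P(X_B=b,X_C=c)$. These are precisely the quantities that appear, after division by $P(X_C=c)$, in the conditional probabilities of the definition.

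For the forward direction, I would assume ${X_A \independent X_B \,|\, X_C}$ and fix $c\in\mathcal{X}_C$. If $P(X_C=c)=0$ then every $p_{a,b,c,+}$ is zero, being a nonnegative number that sums to $P(X_C=c)$, and each quadratic vanishes trivially. If $P(X_C=c)>0$, clearing denominators in the identity $P(X_A=a,X_B=b\mid X_C=c)=P(X_A=a\mid X_C=c)\,P(X_B=b\mid X_C=c)$ yields $p_{a,b,c,+}\,p_{+,+,c,+}=p_{a,+,c,+}\,p_{+,b,c,+}$ for all $a,b$. Substituting this expression for each of the four factors $p_{a_i,b_j,c,+}$, one checks that both $p_{a_1,b_1,c,+}p_{a_2,b_2,c,+}$ and $p_{a_1,b_2,c,+}p_{a_2,b_1,c,+}$ equal $p_{+,+,c,+}^{-2}\,p_{a_1,+,c,+}p_{a_2,+,c,+}p_{+,b_1,c,+}p_{+,b_2,c,+}$, so their difference is zero.

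For the converse, I would assume all the displayed quadratics vanish and fix $c$ with $P(X_C=c)>0$; there is nothing to check for the remaining values of $c$ since the conditional probabilities are then undefined. Summing the relation $p_{a_1,b_1,c,+}p_{a_2,b_2,c,+}=p_{a_1,b_2,c,+}p_{a_2,b_1,c,+}$ over $a_2\in\mathcal{X}_A$ gives $p_{a_1,b_1,c,+}\,p_{+,b_2,c,+}=p_{a_1,b_2,c,+}\,p_{+,b_1,c,+}$, and summing the result over $b_2\in\mathcal{X}_B$ gives $p_{a_1,b_1,c,+}\,p_{+,+,c,+}=p_{a_1,+,c,+}\,p_{+,b_1,c,+}$. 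Dividing by $p_{+,+,c,+}^2>0$ and reading the three ratios as $P(X_A=a_1,X_B=b_1\mid X_C=c)$, $P(X_A=a_1\mid X_C=c)$ and $P(X_B=b_1\mid X_C=c)$ recovers exactly ${X_A \independent X_B \,|\, X_C}$.

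The whole argument is elementary, and there is no serious obstacle: the only points requiring a little care are the bookkeeping of which coordinates the ``$+$'' subscripts are summed over, and the separate, immediate treatment of the null event $\{X_C=c\}$ in the forward direction.
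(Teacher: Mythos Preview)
Your argument is correct and is the standard elementary proof: factor each marginal via the conditional-independence identity in one direction, and in the other direction sum out $a_2$ and then $b_2$ to recover the factorization $p_{a,b,c,+}\,p_{+,+,c,+}=p_{a,+,c,+}\,p_{+,b,c,+}$. The handling of the null event $\{X_C=c\}$ is also correct.

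There is nothing to compare against here: the paper does not supply its own proof of this proposition but simply quotes it from Sullivant's textbook \cite{Sullivant2019}, so your write-up stands on its own as the standard justification.
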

The conditional independence ideal $I_{{A\independent B \,|\, C}}$ is the ideal generated
by all quadrics in Proposition~\ref{pro:conditional}. If $\mathcal{C}$ is
a list of conditional independence statements then we define $I_{\mathcal{C}}$
as the sum of all conditional independence ideals associated to statements in $\mathcal{C}$.

\begin{example}We consider the contraction axiom for positive distributions using staged tree
models.
Fix three discrete random variables $X_1,X_2,X_3$ with state spaces $[d_1+1],[d_2+1],[d_3+1]$
respectively. The contraction axiom states that the set of conditional independence statements
$\mathcal{C}=\{ {X_1 \indep X_2 \,|\,X_3}, {X_2\indep X_3}\}$ implies the  statement 
${X_2\indep (X_{1},X_3)}$. A primary decomposition of the ideal $I_{\mathcal{C}}$ was obtained
in \cite[Theorem 1]{agbn}. Here we provide a proof using staged trees, that one of the primary components of
$I_{\mathcal{C}}$
is the prime binomial ideal $I_{{X_2\indep (X_{1},X_3)}}$. As mentioned in \cite{agbn} this is a well known fact. First we explain how to represent the  two statements in $\mathcal{C}$ with
a staged tree. Consider the tree $\T$ in Figure~\ref{fig:moretrees}. This tree represents the state space of the vector
$(X_3,X_2,X_1)$ as a sequence of events where $X_3$ takes place first, $X_2$  second and
$X_1$  third. The vertices of $\T$ are indexed recursively as defined at the beginning of
Section~\ref{sec:tfps}. The statement ${X_2\indep X_3}$ is represented by the stage
consisting of the vertices $\{0,\ldots,d_3\}$, these are colored gray in $\T$.
The statement ${ X_1 \indep X_2 \,|\,X_3}$ is represented by the stages $S_{\!0},\ldots,S_{\!d_3}$
where $S_i=\{ij\,|\, j\in \{0,\ldots, d_2\}\}$ and $i\in\{0,\ldots,d_3\}$. These stages
	mean that for a given outcome of $X_3$ the unfolding of the event $X_2$ followed by $X_1$
	behaves as an independence model on two  random variables. In Figure~\ref{fig:moretrees}
	the stage $S_0$ is colored in pink and the stage $S_{\!d_3}$ is colored in purple.  Although the  
	gray vertices
	are not in the same position, we can easily check that $\T$ is balanced and
	stratified. Therefore
	$\ker(\varphi_{\T})$ has a quadratic Gr\"obner basis. Following the
	proof of  Theorem~\ref{thm:main} we can construct this basis explicitly. It consists
	of a set of quadratic equations given by the elements in $\Quad_{B_2}$ coming from
	the stages in $S_{\!0},\ldots,S_{\!d_3}$ and the lifts of the equations $\Quad_{B_1}$ coming from the
	stage $\{0,\ldots,d_3\}$. 
	If we swap the order of $X_1$ and $X_2$ in $\T$, we obtain the staged tree $\T'$ in
	Figure~\ref{fig:moretrees}. This tree represents the same statistical model as $\T$
	now with the unfolding of events $X_3,X_2,X_1$. The gray stages in $\T'$ represent
	the statement ${X_2\indep (X_{1},X_3)}$. Hence, after establishing the evident bijection between the
	leaves of $\T$ and $\T'$ we see that $I_{{X_2\indep (X_{1},X_3)}} = \ker(\varphi_{\T'})=\ker(\varphi_{\T})$.
\end{example}

\begin{figure}[t]
\begin{center}
\begin{tikzpicture}
\renewcommand{\xx}{1.6}
\renewcommand{\yy}{0.35}
\node at (0.5*\xx,13*\yy) {$\T\!\!:$};
\node at (1*\xx,14.5*\yy) {\scriptsize$X_3$};
\node at (2*\xx,14.5*\yy) {\scriptsize$X_2$};
\node at (3*\xx,14.5*\yy) {\scriptsize$X_1$};
\draw [dashed] 	(.75*\xx,-0.2*\yy) rectangle (1.25*\xx,14*\yy);
\draw [dashed] 	(1.75*\xx,-0.2*\yy) rectangle (2.25*\xx,14*\yy);
\draw [dashed] 	(2.75*\xx,-0.2*\yy) rectangle (3.25*\xx,14*\yy);

\node (r) at (1*\xx,7*\yy) {\bt};

\node (dd1) at (2*\xx,11.9*\yy) {\small $0$};
\node (dd2) at (2*\xx,3.9*\yy) {\small$d_3$};

\node (d1) at (2*\xx,11*\yy) {\stage{Gray}{$\phantom{;}$}};
\node (d2) at (2*\xx,3*\yy) {\stage{Gray}{$\phantom{;}$}};
\node (d3) at (2*\xx,5.5*\yy) {$\vdots$};
\node (d4) at (2*\xx,8.5*\yy) {$\vdots$};

\node (c1) at (3*\xx,13.4*\yy) {\small$00$};
\node (c2) at (3*\xx,10.4*\yy) {\small$0d_2$};
\node (c3) at (3*\xx,5.4*\yy) {\small$d_30$};
\node (c4) at (3*\xx,2.4*\yy) {\small$d_3d_2$};
\node (d3) at (3*\xx,11.5*\yy) {$\vdots$};
\node (d4) at (3*\xx,3.5*\yy) {$\vdots$};
\node (d3) at (3*\xx,7.5*\yy) {$\vdots$};

\node (c1) at (3*\xx,12.5*\yy) {\stage{Lavender}{$\phantom{;}$}};
\node (c2) at (3*\xx,9.5*\yy) {\stage{Lavender}{$\phantom{;}$}};
\node (c3) at (3*\xx,4.5*\yy) {\stage{Periwinkle}{$\phantom{;}$}};
\node (c4) at (3*\xx,1.5*\yy) {\stage{Periwinkle}{$\phantom{;}$}};

\node (b1) at (4*\xx,13.5*\yy) {\bt};
\node (b2) at (4*\xx,11*\yy) {\bt};
\node (b3) at (4*\xx,3*\yy) {\bt};
\node (b4) at (4*\xx,.5*\yy) {\bt};
\node (b5) at (4*\xx,12.5*\yy) {$\vdots$};
\node (b6) at (4*\xx,2*\yy) {$\vdots$};
\node (b7) at (4*\xx,7.5*\yy) {$\vdots$};

\node [right, xshift=5] at (b1) {$p_{000}$};
\node [right, xshift=5] at (b2) {$p_{0d_2d_1}$};
\node [right, xshift=5] at (b3) {$p_{d_3d_20}$};
\node [right, xshift=5] at (b4) {$p_{d_3d_2d_1}$};

\draw[->] (r) -- node [above] {} (d1);
\draw[->] (r) -- node [below] {} (d2);

\draw[->] (d1) -- node [above] {} (c1);
\draw[->] (d1) -- node [above] {} (c2);
\draw[->] (d2) -- node [above] {} (c3);
\draw[->] (d2) -- node [above] {} (c4);

\draw[->] (c1) -- node [above] {} (b1);
\draw[->] (c1) -- node [below] {} (b2);
\draw[->] (c4) -- node [above] {} (b3);
\draw[->] (c4) -- node [below] {} (b4);


\end{tikzpicture}
\quad
\begin{tikzpicture}
\renewcommand{\xx}{1.6}
\renewcommand{\yy}{0.35}
\node at (0.5*\xx,13*\yy) {$\T'\!\!:$};
\node at (1*\xx,14.5*\yy) {\scriptsize$X_3$};
\node at (2*\xx,14.5*\yy) {\scriptsize$X_1$};
\node at (3*\xx,14.5*\yy) {\scriptsize$X_2$};
\draw [dashed] 	(.75*\xx,-0.2*\yy) rectangle (1.25*\xx,14*\yy);
\draw [dashed] 	(1.75*\xx,-0.2*\yy) rectangle (2.25*\xx,14*\yy);
\draw [dashed] 	(2.75*\xx,-0.2*\yy) rectangle (3.25*\xx,14*\yy);

\node (r) at (1*\xx,7*\yy) {\bt};

\node (dd1) at (2*\xx,11.9*\yy) {\small $0$};
\node (dd2) at (2*\xx,3.9*\yy) {\small$d_3$};

\node (d1) at (2*\xx,11*\yy) {\stage{Lavender}{$\phantom{;}$}};
\node (d2) at (2*\xx,3*\yy) {\stage{Periwinkle}{$\phantom{;}$}};
\node (d3) at (2*\xx,5.5*\yy) {$\vdots$};
\node (d4) at (2*\xx,8.5*\yy) {$\vdots$};

\node (c1) at (3*\xx,13.4*\yy) {\small$00$};
\node (c2) at (3*\xx,10.4*\yy) {\small$0d_1$};
\node (c3) at (3*\xx,5.4*\yy) {\small$d_30$};
\node (c4) at (3*\xx,2.4*\yy) {\small$d_3d_1$};
\node (d3) at (3*\xx,11.5*\yy) {$\vdots$};
\node (d4) at (3*\xx,3.5*\yy) {$\vdots$};
\node (d3) at (3*\xx,7.5*\yy) {$\vdots$};

\node (c1) at (3*\xx,12.5*\yy) {\stage{Gray}{$\phantom{;}$}};
\node (c2) at (3*\xx,9.5*\yy) {\stage{Gray}{$\phantom{;}$}};
\node (c3) at (3*\xx,4.5*\yy) {\stage{Gray}{$\phantom{;}$}};
\node (c4) at (3*\xx,1.5*\yy) {\stage{Gray}{$\phantom{;}$}};

\node (b1) at (4*\xx,13.5*\yy) {\bt};
\node (b2) at (4*\xx,11*\yy) {\bt};
\node (b3) at (4*\xx,3*\yy) {\bt};
\node (b4) at (4*\xx,.5*\yy) {\bt};
\node (b5) at (4*\xx,12.5*\yy) {$\vdots$};
\node (b6) at (4*\xx,2*\yy) {$\vdots$};
\node (b7) at (4*\xx,7.5*\yy) {$\vdots$};


\draw[->] (r) -- node [above] {} (d1);
\draw[->] (r) -- node [below] {} (d2);

\draw[->] (d1) -- node [above] {} (c1);
\draw[->] (d1) -- node [above] {} (c2);
\draw[->] (d2) -- node [above] {} (c3);
\draw[->] (d2) -- node [above] {} (c4);

\draw[->] (c1) -- node [above] {} (b1);
\draw[->] (c1) -- node [below] {} (b2);
\draw[->] (c4) -- node [above] {} (b3);
\draw[->] (c4) -- node [below] {} (b4);


\end{tikzpicture}

\end{center} 
\caption{The staged trees $\T$ and $\T'$ are statistically equivalent, they represent the contraction 
axiom for three discrete random variables $X_1,X_2$ and $X_3$.}\label{fig:moretrees}
\end{figure}
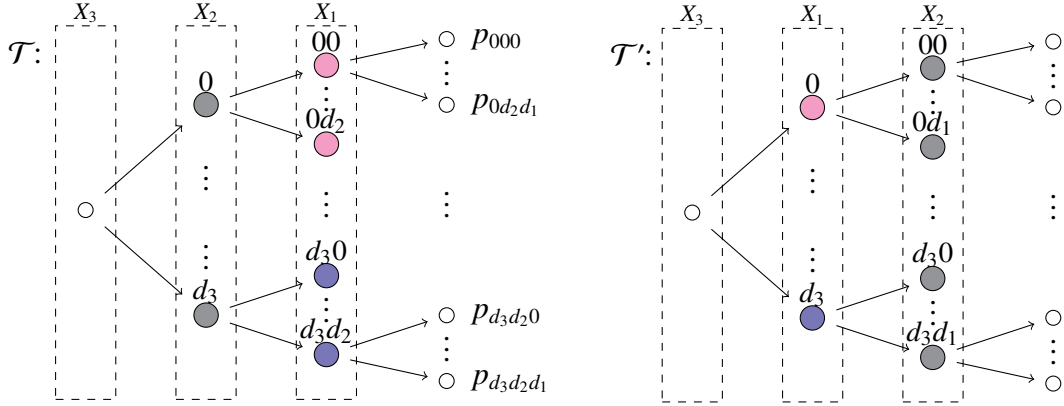

One of the main differences between staged tree models and discrete Bayesian networks is that the state
space of a Bayesian network is equal to the product of the state spaces of the random variables in the vertices
of the graph while the state space of a staged tree model does not necessarily have to equal a cartesian product.
When $\T$  is not equal to the cartesian product of some finite sets we call the tree $\T$ \emph{asymmetric}.
The lemmas that follow are important to show that Theorem~\ref{thm:main} also holds 
for the case when $\T$ is asymmetric. This implies that
we can use Theorem~\ref{thm:main} to construct quadratic Gr\"obner bases for  staged tree models whose 
underlying tree does not necessarily represents the distribution of
a vector of discrete random variables.

The definition of staged tree in \cite{CGorgenPhD} requires that each vertex in $\T$ has either no or at least
two outgoing edges from $v$. We stepped away from making this requirement for the staged trees 
we consider in Section~\ref{sec:stagedtrees}.
In the next lemmas we explain how this mild extension of the definition behaves with respect to
condition $(\star)$ and how trees defined according to \cite{CGorgenPhD} are recovered from the
more general trees we consider. Throughout the next lemmas, we fix a staged tree $(\T,\theta)$ with edge set $E$ and define
$E_1=\{e\in E\,\mid\, E(v)=\{e\} \text{ for some } v\in V\}$. For the trees in Figure~\ref{fig:asymtrees},
$\T$ has $|E_1|=6$ while for $\T'$, $|E_1|=0$.

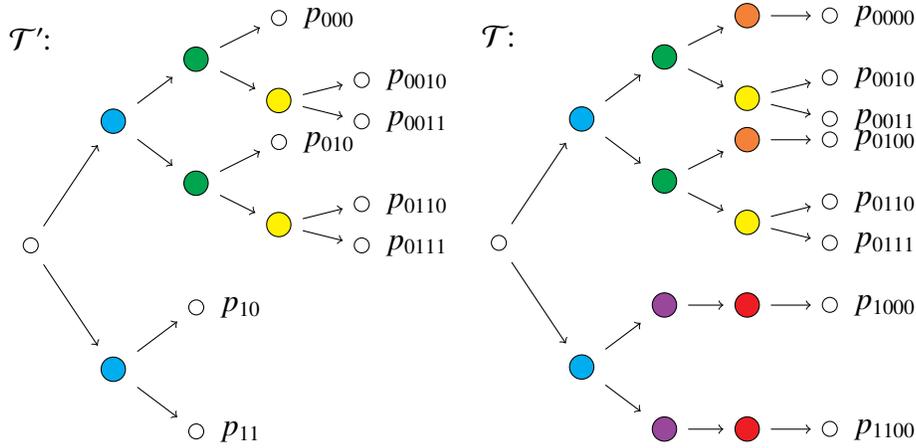
\begin{figure}[t]
\begin{center}
\quad
\begin{tikzpicture}
\renewcommand{\xx}{1.1}
\renewcommand{\yy}{0.55}

\node at (0,10*\yy) {$\T'\!\!:$};

\node (r) at (0,5*\yy) {\bt};

\node (v0) at (1*\xx,8*\yy) {\stage{Cyan}{$\phantom{;}$}};
\node (v1) at (1*\xx,2*\yy) {\stage{Cyan}{$\phantom{;}$}};

\node (w1) at (2*\xx,9.5*\yy) {\stage{Green}{$\phantom{;}$}};
\node (w2) at (2*\xx,6.5*\yy) {\stage{Green}{$\phantom{;}$}};
\node (w3) at (2*\xx,3.5*\yy) {\bt}; 
\node (w4) at (2*\xx,0.5*\yy) {\bt}; 

\node (a1) at (3*\xx,10.5*\yy) {\bt};
\node (a2) at (3*\xx,8.5*\yy) {\stage{Yellow}{$\phantom{;}$}};
\node (a3) at (3*\xx,7.5*\yy) {\bt}; 
\node (a4) at (3*\xx,5.5*\yy) {\stage{Yellow}{$\phantom{;}$}}; 

\node (b1) at (4*\xx,9*\yy) {\bt};
\node (b2) at (4*\xx,8*\yy) {\bt};
\node (b3) at (4*\xx,6*\yy) {\bt}; 
\node (b4) at (4*\xx,5*\yy) {\bt};

\draw[->] (r) -- node [above] {} (v0);
\draw[->] (r) -- node [above] {} (v1);
\draw[->] (v0) -- node [above] {} (w1);
\draw[->] (v0) -- node [above] {} (w2);
\draw[->] (v1) -- node [above] {} (w3);
\draw[->] (v1) -- node [above] {} (w4);

\draw[->] (w1) -- node [above] {} (a1);
\draw[->] (w1) -- node [above] {} (a2);

\draw[->] (w2) -- node [above] {} (a3);
\draw[->] (w2) -- node [above] {} (a4);

\draw[->] (a2) -- node [above] {} (b1);
\draw[->] (a2) -- node [above] {} (b2);

\draw[->] (a4) -- node [above] {} (b3);
\draw[->] (a4) -- node [above] {} (b4);

\node [right, xshift=5] at (a1) {$p_{000}$};
\node [right, xshift=5] at (a3) {$p_{010}$};
\node [right, xshift=5] at (b1) {$p_{0010}$};
\node [right, xshift=5] at (b2) {$p_{0011}$};
\node [right, xshift=5] at (b3) {$p_{0110}$};
\node [right, xshift=5] at (b4) {$p_{0111}$};
\node [right, xshift=5] at (w3) {$p_{10}$};
\node [right, xshift=5] at (w4) {$p_{11}$};

\end{tikzpicture}%
\,
\begin{tikzpicture}
\renewcommand{\xx}{1.1}
\renewcommand{\yy}{0.55}

\node at (0,10*\yy) {$\T\!\!:$};

\node (r) at (0,5*\yy) {\bt};

\node (v0) at (1*\xx,8*\yy) {\stage{Cyan}{$\phantom{;}$}};
\node (v1) at (1*\xx,2*\yy) {\stage{Cyan}{$\phantom{;}$}};

\node (w1) at (2*\xx,9.5*\yy) {\stage{Green}{$\phantom{;}$}};
\node (w2) at (2*\xx,6.5*\yy) {\stage{Green}{$\phantom{;}$}};
\node (w3) at (2*\xx,3.5*\yy) {\stage{Purple}{$\phantom{;}$}}; 
\node (w4) at (2*\xx,0.5*\yy) {\stage{Purple}{$\phantom{;}$}}; 

\node (a1) at (3*\xx,10.5*\yy) {\stage{Orange}{$\phantom{;}$}};
\node (a2) at (3*\xx,8.5*\yy) {\stage{Yellow}{$\phantom{;}$}};
\node (a3) at (3*\xx,7.5*\yy) {\stage{Orange}{$\phantom{;}$}}; 
\node (a4) at (3*\xx,5.5*\yy) {\stage{Yellow}{$\phantom{;}$}}; 
\node (a5) at (3*\xx,3.5*\yy) {\stage{Red}{$\phantom{;}$}}; 
\node (a6) at (3*\xx,0.5*\yy) {\stage{Red}{$\phantom{;}$}}; 

\node (b0) at (4*\xx,10.5*\yy) {\bt};
\node (b1) at (4*\xx,9*\yy) {\bt};
\node (b2) at (4*\xx,8*\yy) {\bt};
\node (b22) at (4*\xx,7.5*\yy) {\bt};
\node (b3) at (4*\xx,6*\yy) {\bt}; 
\node (b4) at (4*\xx,5*\yy) {\bt}; 
\node (b5) at (4*\xx,3.5*\yy) {\bt}; 
\node (b6) at (4*\xx,0.5*\yy) {\bt}; 

\node [right, xshift=5] at (b0) {$p_{0000}$};
\node [right, xshift=5] at (b1) {$p_{0010}$};
\node [right, xshift=5] at (b2) {$p_{0011}$};
\node [right, xshift=5] at (b22) {$p_{0100}$};
\node [right, xshift=5] at (b3) {$p_{0110}$};
\node [right, xshift=5] at (b4) {$p_{0111}$};
\node [right, xshift=5] at (b5) {$p_{1000}$};
\node [right, xshift=5] at (b6) {$p_{1100}$};

\draw[->] (r) -- node [above] {} (v0);
\draw[->] (r) -- node [above] {} (v1);
\draw[->] (v0) -- node [above] {} (w1);
\draw[->] (v0) -- node [above] {} (w2);
\draw[->] (v1) -- node [above] {} (w3);
\draw[->] (v1) -- node [above] {} (w4);

\draw[->] (w1) -- node [above] {} (a1);
\draw[->] (w1) -- node [above] {} (a2);

\draw[->] (w2) -- node [above] {} (a3);
\draw[->] (w2) -- node [above] {} (a4);
\draw[->] (w3) -- node [above] {} (a5);
\draw[->] (w4) -- node [above] {} (a6);

\draw[->] (a2) -- node [above] {} (b1);
\draw[->] (a2) -- node [above] {} (b2);

\draw[->] (a4) -- node [above] {} (b3);
\draw[->] (a4) -- node [above] {} (b4);

\draw[->] (a1) -- node [above] {} (b0);
\draw[->] (a3) -- node [above] {} (b22);
\draw[->] (a5) -- node [above] {} (b5);
\draw[->] (a6) -- node [above] {} (b6);

\end{tikzpicture}%

\end{center} 
\caption{The staged trees $\T$ and $\T'$ are statistically equivalent.}\label{fig:asymtrees}
\end{figure}

\begin{lemma}\label{lem:ttprime1}
Suppose $(\T,\theta)$ is a staged tree.
 Let $\T'$ be the staged tree obtained
from $\T$ by contracting the edges in $E_1$. Then $\mathcal{M}_{(\T,\theta)}=\mathcal{M}_{(\T',\theta)}$
and $\ker(\overline{\varphi}_\T)=\ker(\overline{\varphi}_{\T'})$.
\end{lemma}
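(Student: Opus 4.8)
The plan is to make the contraction explicit, observe that the sum-to-one relations generating $\frak{q}$ force every edge of $E_1$ to carry the value $1$, and then transport both the model and the ideal across the bijection of root-to-leaf paths induced by contraction. First I would record the shape of $E_1$: a vertex is the tail of an $E_1$-edge exactly when it has a single child, so the edges of $E_1$ form a disjoint union of directed paths $v_0\to v_1\to\cdots\to v_k$, and in each maximal such path the bottom vertex $v_k$ is a leaf or has at least two children (otherwise the path extends). Contracting $E_1$ collapses each path to a single vertex $[v_0]$, whose outgoing edges in $\T'$, with their inherited labels, are exactly $E(v_k)$; writing $\beta([v])$ for the bottom of a class, we get $\theta_{[v]}=\theta_{\beta([v])}$. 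I would then check that $(\T',\theta)$ is a staged tree: $|\theta_{[v]}|=|\theta_{\beta([v])}|=|E(\beta([v]))|=|E([v])|$ gives condition $(1)$ of Definition~\ref{def:stagedtree}, and $\theta_{[v]},\theta_{[w]}$ are equal or disjoint because $\theta_{\beta([v])},\theta_{\beta([w])}$ are. Contraction also yields the bijection $b\colon\Lambda_{\T}\to\Lambda_{\T'}$ with $E(b(\lambda))=E(\lambda)\setminus E_1$.

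Next I would separate the labels and the sum-to-one relations. Any vertex $v$ with $E(v)=\{e\}$ forms a singleton stage $\theta_v=\{\theta(e)\}$, while every non-singleton vertex $u$ has $|\theta_u|\ge 2$; by condition $(2)$ of Definition~\ref{def:stagedtree}, $\theta_u$ is then disjoint from $\theta_v$. Hence $\theta(E_1)$ is disjoint from the label set $\la'$ of $\T'$ and $\la=\la'\sqcup\theta(E_1)$. Moreover the non-leaf vertices of $\T'$ are precisely the branching vertices of $\T$ (those of out-degree $\ge 2$), while the remaining non-leaf vertices of $\T$ are the singletons, contributing the generators $1-\theta(e)$, $e\in E_1$. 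Comparing the sum-to-one generators vertex by vertex therefore gives, inside $\R[\Theta]_{\T}=\R[z,\la',\theta(E_1)]$, the equality $\frak{q}=\frak{q}'+\langle\,1-\theta(e)\mid e\in E_1\,\rangle$, where $\frak{q}'$ is the sum-to-one ideal of $\T'$. In particular $\theta(e)=1$ in $\R[\Theta]_{\M_{\T}}$ for every $e\in E_1$, and $\theta(e)=1$ on $\Theta_{\T}$.

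Now I would conclude. Restricting parameters to $\la'$ is a bijection $\Theta_{\T}\to\Theta_{\T'}$ (its inverse sets $\theta(e)=1$ for $e\in E_1$), and for each leaf $\bm{j}$ one has $\prod_{e\in E(\lambda_{\bm{j}})}\theta(e)=\prod_{e\in E(b(\lambda_{\bm{j}}))}\theta(e)$ on $\Theta_{\T}$ since the dropped factors are $1$; hence, under the coordinate relabeling $b$, the images of $\Psi_{\T}$ and $\Psi_{\T'}$ coincide, i.e.\ $\mathcal{M}_{(\T,\theta)}=\mathcal{M}_{(\T',\theta)}$. For the ideals I identify $\R[p]_{\T}$ with $\R[p]_{\T'}$ via $p_{\lambda}\leftrightarrow p_{b(\lambda)}$, and $\R[\Theta]_{\M_{\T}}$ with $\R[\Theta]_{\M_{\T'}}$ via the isomorphism induced by $\frak{q}=\frak{q}'+\langle 1-\theta(e)\mid e\in E_1\rangle$, which sends $\theta(e)\mapsto 1$ for $e\in E_1$. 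Under these identifications $\overline{\varphi}_{\T}(p_{\lambda})=\pi\bigl(z\prod_{e\in E(\lambda)}\theta(e)\bigr)=\pi\bigl(z\prod_{e\in E(b(\lambda))}\theta(e)\bigr)=\overline{\varphi}_{\T'}(p_{b(\lambda)})$, so $\overline{\varphi}_{\T}$ and $\overline{\varphi}_{\T'}$ are the same homomorphism and $\ker(\overline{\varphi}_{\T})=\ker(\overline{\varphi}_{\T'})$.

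The step I expect to require the most care is the bookkeeping behind the two ring identifications: that $\theta(E_1)$ is disjoint from every other label (which uses both staged-tree axioms, through the fact that singleton vertices form stages of size one) and that the sum-to-one ideals of $\T$ and $\T'$ differ exactly by the generators $1-\theta(e)$, $e\in E_1$ (which requires the explicit description of the contracted vertex-classes, their bottoms, and which vertices are branching versus singleton). Once these structural facts are in place, both equalities are formal.
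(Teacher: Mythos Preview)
Your proof is correct and follows essentially the same approach as the paper: observe that the sum-to-one constraints force $\theta(e)=1$ for every $e\in E_1$, identify root-to-leaf paths of $\T$ and $\T'$ via contraction, and conclude that both the parameterization $\Psi$ and the map $\overline{\varphi}$ agree under this identification. The paper's version is terser---it simply asserts the key equality $\prod_{e\in E(\lambda)}\theta(e)=\prod_{e\in E(\lambda')}\theta(e)$ on $\Theta_{\T}$ and says ``a similar argument'' handles $\overline{\varphi}$---whereas you supply the structural bookkeeping (that $\T'$ is again a staged tree, that $\theta(E_1)$ is disjoint from $\la'$, and that $\frak{q}=\frak{q}'+\langle 1-\theta(e)\mid e\in E_1\rangle$) which makes the ring identification $\R[\Theta]_{\M_{\T}}\cong\R[\Theta]_{\M_{\T'}}$ explicit.
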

\begin{proof}
First, note that the number of root-to-leaf paths in $\T'$ is the same as in $\T$. Moreover, each root-to-leaf
path $\lambda'$ in $\T'$ is obtained from a unique root-to-leaf path $\lambda$ in $\T$ by contracting the edges
in $E_1$. Now let $\lambda$ be a root-to-leaf path in $\T$. The $\lambda$-coordinate of the map $\Psi_{\T}$
applied to an element $\bm{\theta}\in \Theta_{\T}$ is
\begin{align*}
[\Psi_{\T}(\bm{\theta})]_{\lambda} &= \prod_{e\in E(\lambda)} \theta(e)= \prod_{e\in E(\lambda')}\theta(e) \\
&=[\Psi_{\T'}(\bm{\theta}\mid_{\T'})]_{\lambda'} 
\end{align*}
The second equality in the previous equation follows from taking a closer look at $\Theta_{\T}$. Indeed for all $e\in E_1$
we have $\theta(e)=1$ because of the sum-to-one conditions imposed on $\Theta_{\T}$ in Definition~\ref{def:stagedtreemodel}.
For the third equality, $\bm{\theta}\mid_{\T'}$ denotes the restriction of the vector $\bm{\theta}$ to the edge labels of
$\T'$. It follows from the equalities above that the coordinates of $\Psi_{\T}$ and $\Psi_{\T'}$ are equal. Therefore
$\mathcal{M}_{(\T,\theta)}=\mathcal{M}_{(\T',\theta)}$. A similar argument applied to the maps $\overline{\varphi}_{\T}$ and $\overline{\varphi}_{\T'}$
shows that $\ker(\overline{\varphi}_\T)=\ker(\overline{\varphi}_{\T'})$. To carry out this argument we need to reindex the leaves
of the trees, this can be done by dropping the index of the elements in $E_1$. 
\end{proof}
We illustrate Lemma~\ref{lem:ttprime1} in Figure~\ref{fig:asymtrees} where $\T'$ is obtained from
$\T$ by contracting the six edges in $E_1$. The two staged trees in this figure define the same statistical model.

\begin{remark}
To prove Corollary ~\ref{cor:main} we used \cite[Theorem 10]{DG2019}. The proof of Theorem 10 in \cite{DG2019} is presented for trees such that $E_1=0$. However the result still holds when $|E_1|>1$
because the ideal $\ipaths$ ( from \cite{DG2019}) is contained in $\ker(\varphi_{\T})$ in this case also, see \cite{DG2019} 
for more details. 
\end{remark}

\begin{lemma} \label{lem:ttprime2}
Suppose $(\T,\theta)$ is a balanced  and stratified staged tree. Let $\T'$ be the tree obtained from $\T$ by
contracting the edges in $E_1$. Then $(\T',\theta)$ is also balanced.
\end{lemma}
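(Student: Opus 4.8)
The plan is to realise balancedness of $\T'$ as the image, under a single algebra homomorphism, of balancedness of $\T$. Write $E_1\subseteq E$ for the set of edges whose source has a unique child, and let $\sigma\colon \R[\Theta]_{\T}\to\R[\Theta]_{\T'}$ be the ring homomorphism fixing the homogenising variable $z$ and every label not occurring on an edge of $E_1$, and sending each label on an edge of $E_1$ to $1$. First I would check that $\sigma$ makes sense, i.e. that the labels on edges of $E_1$ form a set of indeterminates disjoint from those on $E\setminus E_1$: if $e=(v,w)\in E_1$ then $|\theta_v|=|E(v)|=1$, and by condition (2) of Definition~\ref{def:stagedtree} any edge carrying the label $\theta(e)$ has its source in the stage of $v$, hence also a unique outgoing edge, hence lies in $E_1$. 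So the label set of $\T'$ is exactly $\la$ with the $E_1$-labels deleted.

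Next I would spell out the combinatorics of the contraction (write $\theta'$ for the labelling of $\T'$ induced from $\theta$). Each vertex of $\T'$ is a vertex $v$ of $\T$ that is either the root or has a parent with at least two children, and it sits at the top of a maximal directed path $v=u_0\to u_1\to\cdots\to u_m=:\beta(v)$ in $\T$ where $u_0,\dots,u_{m-1}$ each have a unique child and $\beta(v)$ has $0$ or at least two children; in $\T'$ this path is collapsed onto $v$. Two consequences I would record: (i) the set of children of $v$ in $\T'$ equals the set of children of $\beta(v)$ in $\T$ — each such child is again the top of its own collapsed path and hence a vertex of $\T'$ — and $\theta'(v,c)=\theta(\beta(v),c)$ for every such child $c$, so in particular $\theta'_v=\theta_{\beta(v)}$; (ii) for every vertex $v$ of $\T'$ one has $t'(v)=\sigma\bigl(t(v)\bigr)$, where $t'(\cdot)$ denotes the interpolating polynomial computed inside $\T'$. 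For (ii) I would argue, as in the proof of Lemma~\ref{lem:ttprime1}, that the $v$-to-leaf paths of $\T'$ are obtained bijectively from those of $\T$ by deleting the $E_1$-edges, so that $\prod_{e\in E(\lambda')}\theta(e)=\sigma\bigl(\prod_{e\in E(\lambda)}\theta(e)\bigr)$ path by path, and then sum.

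Granting (i) and (ii), the conclusion follows quickly. Let $v,w$ be two vertices in the same stage of $\T'$; since condition $(\star)$ is vacuous when $v$ or $w$ is a leaf, assume both are non-leaves. By (i), $\theta'_v=\theta_{\beta(v)}$ and $\theta'_w=\theta_{\beta(w)}$, so $\theta_{\beta(v)}=\theta_{\beta(w)}$ and thus $\beta(v),\beta(w)$ lie in the same stage of $\T$. Write $\mathrm{ch}(\beta(v))=\{a_0,\dots,a_k\}$ and $\mathrm{ch}(\beta(w))=\{b_0,\dots,b_k\}$, reindexed so that $\theta(\beta(v),a_i)=\theta(\beta(w),b_i)$ for all $i$; by (i) we have $\mathrm{ch}_{\T'}(v)=\{a_0,\dots,a_k\}$, $\mathrm{ch}_{\T'}(w)=\{b_0,\dots,b_k\}$ and $\theta'(v,a_i)=\theta'(w,b_i)$. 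Since $\T$ is balanced, $\beta(v)$ and $\beta(w)$ satisfy condition $(\star)$ in $\R[\Theta]_{\T}$, i.e. $t(a_i)t(b_j)=t(b_i)t(a_j)$ for all $i\neq j$. Applying $\sigma$ and invoking (ii) gives $t'(a_i)t'(b_j)=t'(b_i)t'(a_j)$ in $\R[\Theta]_{\T'}$, which is exactly condition $(\star)$ for $v,w$. Hence $\T'$ is balanced.

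The step I expect to cost the most care is establishing (i) and (ii): a clean description of what contracting the edges of $E_1$ does to vertices, children, edge labels, and root-to-leaf paths, and checking that the reindexing of $\mathrm{ch}(w)$ witnessing that $v,w$ share a stage in $\T'$ is compatible with the reindexing of $\mathrm{ch}(\beta(w))$ used in condition $(\star)$ for $\beta(v),\beta(w)$ in $\T$. I would also note in passing that stratifiedness of $\T$ is not actually needed for this lemma — it appears in the hypothesis only because $\T'$ is subsequently combined with results requiring it — and that $\T'$ itself need not be stratified, which is why the statement asserts only balancedness.
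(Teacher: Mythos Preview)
Your argument is correct and follows the same core idea as the paper's: apply the specialisation $\theta(e)\mapsto 1$ for $e\in E_1$ to condition $(\star)$ in $\T$ and read off condition $(\star)$ in $\T'$. Your version is more careful than the paper's sketch --- the map $\beta$ makes precise how a stage in $\T'$ corresponds to a stage in $\T$, and your closing observation is apt: the paper's proof actually invokes ``since $\T'$ is stratified'', which is not true in general (the very example in Figure~\ref{fig:asymtrees} has leaves at three different levels), whereas your argument correctly avoids any use of stratifiedness.
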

\begin{proof}
Suppose $\T$ is balanced and $\aaa,\bbb$ are in the same stage. Following the notation from Definition~\ref{def:star},  we have 
$t(\bm{a}i)t( \bm{b}j)=t(\bm{b}j)t(\bm{a}j) \text{ in } \R[\Theta]_{\T}, \text{ for all } i\neq j \in \{0,1,\ldots,k\}$. If we specialize $\theta(e)=1$ in this 
equation for all $e\in E_1$ and since $\T'$ is stratified, then $t(\bm{a}i)t( \bm{b}j)\mid_{\theta(e)=1,e\in E_1}=t(\bm{b}j)t(\bm{a}i)\mid_{\theta(e)=1,e\in E_1}$ 
in $\R[\Theta]_{\T'}$. Therefore $\T'$ is also balanced.
\end{proof}

\begin{corollary} \label{cor:main2}
Suppose $\T$ is a balanced and stratified staged tree with $|E_1|>1$. Let $\T'$ be the staged
tree obtained from $\T$ by contracting the edges in $E_1$. Then $\ker(\overline{\varphi}_{\T'})$
is a toric ideal with a quadratic Gr\"obner basis whose intial ideal is squarefree.
\end{corollary}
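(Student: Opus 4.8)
The plan is to reduce the statement about $\T'$ back to the original tree $\T$, where Theorem~\ref{thm:main} applies directly, by chaining together the two preceding lemmas with \cite[Theorem 10]{DG2019} (in the form valid for $|E_1|>1$ explained in the Remark after Lemma~\ref{lem:ttprime1}). The first step is to record the relevant identifications of ideals. By Lemma~\ref{lem:ttprime1}, contracting the edges in $E_1$ does not change the model ideal, so $\ker(\overline{\varphi}_{\T'})=\ker(\overline{\varphi}_{\T})$ as ideals in $\R[p]_{\T'}$, after identifying $\R[p]_{\T'}$ with $\R[p]_{\T}$ via the bijection between root-to-leaf paths of $\T$ and $\T'$ used in the proof of that lemma. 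Next, since $\T$ is balanced, \cite[Theorem 10]{DG2019} gives $\ker(\overline{\varphi}_{\T})=\ker(\varphi_{\T})$; the hypothesis $|E_1|>1$ is not an obstacle here because the proof of that theorem extends to this setting as noted in the Remark. Combining the two identities yields $\ker(\overline{\varphi}_{\T'})=\ker(\varphi_{\T})$, and in particular $\ker(\overline{\varphi}_{\T'})$ is a toric ideal.

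It then remains to invoke Theorem~\ref{thm:main}: because $\T$ is balanced and stratified, $\ker(\varphi_{\T})$ is generated by a quadratic Gr\"obner basis with squarefree initial ideal. Transporting this Gr\"obner basis along the ring isomorphism $\R[p]_{\T}\cong\R[p]_{\T'}$ produces a quadratic Gr\"obner basis with squarefree initial ideal for $\ker(\overline{\varphi}_{\T'})$, which is exactly the assertion of the corollary. One may additionally remark, using Lemma~\ref{lem:ttprime2}, that $\T'$ is itself balanced; note however that $\T'$ need not be stratified, which is precisely why the argument must route through $\T$ rather than applying Theorem~\ref{thm:main} to $\T'$ directly.

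Since every step is a direct appeal to an already-established result, there is no genuine difficulty; the only point deserving attention is careful bookkeeping of the reindexing of leaves induced by the edge contraction, so that the ``equal'' ideals in $\R[p]_{\T}$ and $\R[p]_{\T'}$ are matched up correctly. This is already handled in the proof of Lemma~\ref{lem:ttprime1}, so the present proof can simply cite it.
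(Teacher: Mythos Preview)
Your proof is correct and follows essentially the same route as the paper: the paper invokes Corollary~\ref{cor:main} (which itself is just Theorem~\ref{thm:main} combined with \cite[Theorem 10]{DG2019}) to conclude that $\ker(\overline{\varphi}_{\T})$ is toric with a quadratic Gr\"obner basis and squarefree initial ideal, and then applies Lemma~\ref{lem:ttprime1} to transfer this to $\ker(\overline{\varphi}_{\T'})$. You have simply unpacked Corollary~\ref{cor:main} into its two ingredients; your observation that Lemma~\ref{lem:ttprime2} is not actually needed here matches the paper, which does not use it in this proof either.
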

\begin{proof}
From Corollary~\ref{cor:main} it follows that $\ker(\overline{\varphi}_{\T})$ is a toric ideal with a
quadratic Gr\"obner basis and squarefree initial ideal. After an appropiate bijection, by Lemma~\ref{lem:ttprime1}, $\ker(\overline{\varphi}_{\T})=\ker(\overline{\varphi}_{\T'})$.
\end{proof}

We illustrate the result in Corollary~\ref{cor:main2} with an example.

\begin{example}
Fix $\T$ and $\T'$ to be the staged trees in Figure~\ref{fig:asymtrees}. The staged tree $\T'$
is considered in \cite[Section 6]{DG2019} as an example of the possible unfolding of events in a cell
culture. A thorough discussion of this example and its difference with other graphical models is
also contained in \cite[Section 6]{DG2019}.  Here we explain how to obtain a Gr\"obner basis for
$\ker(\varphi_{\T'})$ using Corollary~\ref{cor:main2}.
The tree $\T'$ is balanced and
statistically equivalent to $\T$. By Corollary~\ref{cor:main}, $\T$ has a quadratic Gr\"obner
basis with square free initial ideal. Using the lemmas preceeding this example, there is a bijection
between the root-to-leaf paths in $\T$ and $\T'$ thus $\R[p]_{\T}$ and
$\R[p]_{\T'}$ are isomorphic. Under
this isomorphism, the Gr\"obner basis for $\ker(\overline{\varphi}_{\T})$ is a Gr\"obner basis
for $\ker(\overline{\varphi}_{\T'})$ its generators are
\begin{align*}
 &p_{0111}p_{10}-p_{0011}p_{110},\,p_{0011}p_{0110}-p_{0010}p_{0111},\,p_{0110}p_{10}-p_{0010}p_{110},\, \\
 &p_{0010}p_{010}-p_{000}p_{0110},\,p_{0011}p_{010}-p_{000}p_{0111},\,p_{010}p_{10}-p_{000}p_{110}.
\end{align*}

\end{example}
\section*{Acknowledgements}
Computations using the symbolic algebra software {\em Macaulay2} \cite{M2} were crucial for the development of this paper. We thank Thomas Kahle for his support in the completion of this project and Bernd Sturmfels for a careful reading of an earlier version of this manuscript.
Both authors were supported by the Deutsche Forschungsgemeinschaft (314838170, GRK 2297 MathCoRe).

\bibliography{JAS_Sample}
\bibliographystyle{plain}

\medskip

\noindent
\footnotesize {\bf Authors' addresses:}


\noindent Lamprini Ananiadi, Otto-von-Guericke Universit\"at, Magdeburg,
\hfill {\tt lamprini.ananiadi@ovgu.de}

\noindent Eliana Duarte,  MPI-MiS ,Leipzig and Otto-von-Guericke Universit\"at, Magdeburg,
\hfill {\tt eliana.duarte@ovgu.de}

\end{document}